\theoremstyle{plain}
\newtheorem{theorem}{Theorem}[section]
\newtheorem{corollary}[theorem]{Corollary}
\newtheorem{proposition}[theorem]{Proposition}
\newtheorem{lemma}[theorem]{Lemma}
\theoremstyle{definition}
\newtheorem{definition}[theorem]{Definition}
\newtheorem{remark}[theorem]{Remark}
\newtheorem{example}[theorem]{Example}
\numberwithin{equation}{section}
\begin{document}

\title[Quasi-Ordered Abelian Groups]{Quasi-Ordered Abelian Groups}

\author[E. Stines]{Elijah Stines}
\address{Department of Mathematics\\
         Iowa State University\\
         Ames, Iowa 50011, U.S.A.}
\email{ejstines@iastate.edu} \urladdr{http://www.public.iastate.edu/~ejstines/}

\begin{abstract}
    Abelian groups having partial orderings compatible with their binary operations have long been studied in the literature. In particular, lattice-ordered abelian groups constitute a universal-algebraic variety, and thus form a category which is monadic over the category of sets. The current paper studies the more general case of quasi-ordered abelian groups, identifying some of their more fundamental properties and their relationships to partially ordered and lattice-ordered groups. We reinterpret the category of quasi-ordered abelian groups with order preserving morphisms by examining the interplay between the group and the set of all positive elements under the quasi-ordering. The main result shows that the category of quasi-ordered abelian groups is monadic over the category of set monomorphisms.
\end{abstract}

\maketitle


\section{Introduction}\label{S:Introduction}
     Partially ordered algebras and lattice-ordered algebras have a long history in the literature. They play a prominent role in the work of G.~Birkhoff \cite{MR598630} and L.~Fuchs \cite{MR0171864}. The special case of lattice-ordered groups (linearly ordered groups above all) has been a particularly well studied area \cite{MR0491396, MR1036072, MR0364051, MR0181678}. More recently there have been some attempts to make a general theory of partially ordered algebras that are not necessarily lattice-ordered \cite{MR598630, MR0171864, Pigozzi}.

    Of particular interest to many are the partially ordered groups, more notably abelian groups, occuring for instance as the additive group reducts of subfields of
    $\mathbb{R}$. The collection of all partially ordered abelian groups forms a category whose morphisms are the isotone group homomorphisms.
    Since the objects of this category are not uniquely defined by their underlying sets and the operations
    on those sets, this category does not form a variety of algebras, and is not monadic over the category of sets.

    Compared to their partially ordered counterparts, abelian groups that are quasi-ordered have received much less attention, although they did appear in Pigozzi's general treatment of quasi-ordered algebras \cite{Pigozzi}.
    One major source of quasi-ordered abelian groups is the divisibility quasi-ordering from ring theory. Recall that, in an integral domain $R$, we say that $a\mid b$ for $a$ and $b \in R\setminus\{0\}$ if and
    only if there exists $r\in R$ so that $ar=b$. The following are standard properties of the relation $|$ of divisibility.

    \begin{proposition}\label{P:qdivis}
        Given an integral domain $R$, we have that $|$ is a transitive and reflexive relation. Furthermore, $|$ is translation invariant, meaning if $a\mid b$ and $c\in R\setminus\{0\}$, then $ac\mid bc$.
    \end{proposition}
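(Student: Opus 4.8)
The plan is to verify the three assertions in turn, each being an immediate consequence of the definition of divisibility together with the ring axioms; the only point demanding any care is to keep track of the nonzero hypotheses, so that at every stage the relation $\mid$ is actually asserted between elements of $R\setminus\{0\}$. This is precisely where the integral-domain hypothesis is used: $R$ has no zero divisors, and (as a domain) possesses a multiplicative identity $1\neq 0$.

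First I would dispatch reflexivity: for $a\in R\setminus\{0\}$ the identity $a\cdot 1=a$ with witness $r=1$ gives $a\mid a$. For transitivity, suppose $a\mid b$ and $b\mid c$ with $a,b,c\in R\setminus\{0\}$, and pick $r,s\in R$ with $ar=b$ and $bs=c$. Substituting and using associativity of multiplication gives $c=(ar)s=a(rs)$, so the element $rs\in R$ witnesses $a\mid c$; since $c\neq 0$ by hypothesis, this is a legitimate instance of the relation.

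For translation invariance, assume $a\mid b$ and $c\in R\setminus\{0\}$; then $b\neq 0$ by hypothesis and $b=ar$ for some $r\in R$. Multiplying through by $c$ and using commutativity of multiplication yields $bc=arc=(ac)r$, so $r$ witnesses $ac\mid bc$. It remains only to note that $ac\neq 0$ and $bc\neq 0$, which holds because $R$ has no zero divisors and $a,b,c$ are all nonzero; hence both $ac$ and $bc$ lie in $R\setminus\{0\}$ and the conclusion is meaningful. The only real ``obstacle'' here is this bookkeeping of nonvanishing, and it is resolved entirely by the no-zero-divisors property of an integral domain; the algebraic content of each clause is a single substitution.
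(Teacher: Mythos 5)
Your proof is correct and follows essentially the same route as the paper: reflexivity via the witness $1$, transitivity by composing witnesses, and translation invariance by multiplying the witnessing equation through by $c$. The extra bookkeeping you include about $ac$ and $bc$ being nonzero (using the absence of zero divisors) is a reasonable refinement that the paper leaves implicit.
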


    \begin{proof}
        Suppose that $a\mid b$ and $b\mid c$ and that $ar_1=b$ and $br_2=c$, then $ar_1r_2=c$ so $a\mid c$, also $a\mid a$ since $a1=a$. Let $a\mid b$ and $c\in
        R\setminus\{0\}$, then $ar=b$ implies $acr=bc$, so $ac\mid bc$.
    \end{proof}

    At first examination Proposition \ref{P:qdivis} only provides one with a way of constructing quasi-ordered commutative monoids which are important in
    their own right. The true construction of interest for those concerned with factorization or divisibility properties is realized when considering the group of nonzero elements of the quotient
    field of an integral domain $R$. If we extend the quasi-order provided by Proposition \ref{P:qdivis} to the whole group, defining $a\mid b$ in the quotient field
    if and only if ${b}/{a}\in R$, we see that we have a wide array of abelian quasi-ordered groups, that are not partially ordered groups,
    arising from quasi-ordered groups of divisibility. Many times one needs to investigate the orbits of the action of the units of the ring on the nonzero elements of the quotient field. This construction is the classical group of divisibility, discussed at length in \cite{MR720862}. The group of divisibility was seen as a fundamental way to unite factorization theory and the theory of partially ordered abelian groups in \cite{MR1460771}. Partially ordered abelian groups were also examined recently in the investigation of generalized pseudo effect algebras \cite{MR1922915, MR2411097}.

    In the current paper, quasi-ordered abelian groups are used to provide a general framework for the study of various algebraic properties. Specifically, we examine the relationships between lattice-ordered abelian groups, partially ordered abelian groups, and quasi-ordered abelian groups by finding adjoints for each of the forgetful functors between them. We also develop an adjunction between (a category isomorphic to) the category of quasi-ordered abelian groups and a new base category, investigating the free algebras created by the adjunction. The base category is the category of set monomorphisms. The main result shows that the category of quasi-ordered abelian groups is monadic over the category of set monomorphisms (Theorem~\ref{T:QAb2Setm}).

\section{Ordered Groups}\label{S:OrderedGroups}

    We begin by considering some well known facts about lattice-ordered, partially ordered, and quasi-ordered abelian groups and the relationships between them. Several common categories of algebras appear throughout the paper. We record them as follows.

    \begin{definition}
        An algebra $A$ is said to be \emph{partially} (or respectively \emph{quasi-}) \emph{ordered} if there is a partial (or quasi-) ordering $\leq$ ($\preceq$) on the set $A$ such that each
        basic operation $\rho$ is order-preserving or order-reversing in each of its arguments. The homomorphisms between such algebras are the algebra homomorphisms which are order-preserving.

    \end{definition}

    The categories of all abelian lattice-ordered groups (\emph{abelian l-groups}), partially ordered groups (\emph{abelian po-groups}), and quasi-ordered groups (\emph{abelian qo-groups}), with order-preserving group homomorphisms between them, are denoted respectively as $\mathbf{LAb}$, $\mathbf{PAb}$, and $\mathbf{QAb}$.

    There is an evident forgetful functor $U:\mathbf{QAb}\rightarrow \mathbf{Set}$ which forgets all of the structure on the set. There is a left adjoint to this functor as follows. Let $F:\mathbf{Set}\rightarrow \mathbf{QAb}$ be the functor which assigns to a set $X$ the free abelian group on $X$ equipped with the discrete or trivial ordering $x\leq y$ if and only if $x=y$.

    \begin{proposition}\label{P:setqogrp}
        The functor $F$ is left adjoint to the forgetful functor $U$.
    \end{proposition}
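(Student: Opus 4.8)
The plan is to exhibit, for each set $X$ and each abelian qo-group $G$, a bijection
\[
\mathbf{QAb}(FX, G) \;\cong\; \mathbf{Set}(X, UG)
\]
natural in both arguments, by reducing to the classical free--forgetful adjunction between $\mathbf{Set}$ and the category $\mathbf{Ab}$ of abelian groups. Write $F_0 : \mathbf{Set}\to\mathbf{Ab}$ for the usual free abelian group functor and $U_0 : \mathbf{Ab}\to\mathbf{Set}$ for its forgetful functor, so that $F_0 \dashv U_0$ with unit $\eta_X : X \to U_0 F_0 X$ the inclusion of the generating set. The underlying group of $FX$ is by definition $F_0 X$, and $U = U_0 \circ W$ where $W : \mathbf{QAb}\to\mathbf{Ab}$ is the functor forgetting the quasi-ordering.

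First I would check that $F$ is a well-defined functor. The discrete relation on a set is reflexive and (vacuously) transitive, hence a quasi-ordering; and for $a, b, c \in F_0 X$ the implications $a \le b \Rightarrow a+c \le b+c$ and $a\le b \Rightarrow -b \le -a$ hold trivially, since $a\le b$ forces $a = b$. Thus $FX$ is a genuine object of $\mathbf{QAb}$. Given $f : X\to Y$, the induced group homomorphism $F_0 f : F_0 X \to F_0 Y$ is automatically order-preserving for the discrete orders (every function preserves the reflexive relation), so $Ff := F_0 f$ is a morphism of $\mathbf{QAb}$, and functoriality is inherited from $F_0$.

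The key step is the observation that for any qo-group $G$, \emph{every} group homomorphism $h : F_0 X \to W G$ is order-preserving when regarded as a map $FX \to G$: the only instances of $\le$ in $FX$ are of the form $a\le a$, and $h(a) \le h(a)$ holds by reflexivity of the quasi-ordering on $G$. Consequently
\[
\mathbf{QAb}(FX, G) \;=\; \mathbf{Ab}(F_0 X, W G)
\]
as sets, not merely up to bijection. Composing with the classical adjunction isomorphism $\mathbf{Ab}(F_0 X, WG) \cong \mathbf{Set}(X, U_0 W G) = \mathbf{Set}(X, UG)$ --- explicitly $h \mapsto (Uh)\circ \eta_X$, with inverse sending $g : X \to UG$ to the unique group homomorphism extending $g$ --- yields the desired bijection. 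Naturality in $X$ and in $G$ is inherited from that of the adjunction $F_0 \dashv U_0$, since all the comparison maps involved are the ambient group-theoretic ones; one also checks that the same $\eta_X$ serves as the unit of $F\dashv U$, equivalently that the triangle identities for $F\dashv U$ collapse to those for $F_0\dashv U_0$.

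I do not anticipate a serious obstacle: the entire content is the remark that order-preservation is automatic because the domain $FX$ carries the discrete (trivial) quasi-ordering, which is precisely why that ordering was chosen. The only point requiring a moment's care is bookkeeping --- confirming that the unit $\eta_X : X \to UFX$ one writes down really is a morphism in $\mathbf{Set}$ (it is, trivially) and that the verification of the universal property genuinely factors through the known case of abelian groups rather than hiding some additional order-theoretic condition.
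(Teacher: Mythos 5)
Your proposal is correct and rests on the same key observation as the paper's proof: any group homomorphism out of $FX$ is automatically order-preserving because $FX$ carries the trivial ordering, so the adjunction reduces to the classical free--forgetful adjunction for abelian groups. The extra bookkeeping (well-definedness of $F$, naturality, triangle identities) is a fleshed-out version of what the paper leaves implicit.
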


    \begin{proof}
        Let $G$ be an abelian qo-group. Each set map from $X$ to $UG$ induces a unique group homomorphism $f:FX\rightarrow G$ from the free abelian group $FX$ on $X$. This group homomorphism preserves the trivial ordering.
    \end{proof}

    One can see how the definition of an abelian qo-group is not entirely algebraic in nature. Requiring the algebra homomorphisms to satisfy an additional property, such as order preservation, is not part of traditional universal algebra. In the course of this discussion we shall reinterpret $\mathbf{PAb}$
    and $\mathbf{QAb}$ into more algebraic terms. We are in need of an efficient way to discern the partial order structure on one of these objects via algebraic conditions. We begin by defining the positive cone of an abelian qo-group.

    \begin{definition}
        For an abelian qo-group $(G,\preceq)$, we define $$G^+_0:=\{ g \in G : 0\preceq g\}$$ to be the \emph{positive cone} of $(G,\preceq)$.
    \end{definition}

    We now characterize those subsets of abelian groups which may be considered as positive cones with respect to some quasi-ordering or partial ordering. The group of units of a monoid $M$ is denoted by $M^*$.

    \begin{definition}
                  A cancellative monoid $(M,+,0)$ is said to be \emph{conical} if $M^*=\{0\}$.
    \end{definition}

    \begin{proposition}\label{P:ConeCharacterization}
        Let $G$ be an abelian group and $M\subseteq G$. Then:

        \begin{enumerate}

 \item[$(\mathrm{a})$]       $M$ is the positive cone of a qo-group $(G,\preceq)$ if and only if $M$ is a monoid.

 \item[$(\mathrm{b})$]       $M$ is the positive cone of a po-group $(G, \leq)$ if and only if $M$ is a conical monoid.

    \end{enumerate}

    \end{proposition}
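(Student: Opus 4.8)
The plan is to prove each part by translating back and forth between a quasi-ordering on $G$ and its positive cone. For the forward (``only if'') direction of (a), I would start with a quasi-ordering $\preceq$ compatible with the group operations and check that $G^+_0$ is a submonoid: reflexivity gives $0 \preceq 0$, so $0 \in G^+_0$; and if $0 \preceq a$ and $0 \preceq b$, then translation invariance of $+$ (adding $b$) gives $b \preceq a+b$, and transitivity with $0 \preceq b$ yields $0 \preceq a+b$. For the forward direction of (b) I would, in addition, invoke antisymmetry: if $m$ and $-m$ both lie in $G^+_0$, then adding $m$ to $0 \preceq -m$ gives $m \preceq 0$, whence $m = 0$; since any submonoid of a group is cancellative and $M^\ast = M \cap (-M)$ for such a submonoid, this says precisely that $G^+_0$ is a conical monoid.

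For the reverse (``if'') direction I would, given a submonoid $M \subseteq G$, define $a \preceq b$ to mean $b - a \in M$. Reflexivity is $0 \in M$; transitivity is closure of $M$ under $+$ via $c - a = (c-b)+(b-a)$; and translation invariance holds because $(b+c)-(a+c) = b-a$. Thus $\preceq$ is a quasi-ordering with $+$ order-preserving (and hence $-$ order-reversing, automatically), so $(G,\preceq)$ is an abelian qo-group, and $0 \preceq g \iff g \in M$ shows its positive cone is $M$. For part (b), if $M$ is moreover conical, then $a \preceq b$ and $b \preceq a$ put $b-a$ into $M \cap (-M) = M^\ast = \{0\}$, giving antisymmetry, so $(G,\preceq)$ is a po-group.

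I do not anticipate a serious obstacle; the proof is essentially the standard correspondence between translation-invariant quasi-orderings and submonoids. The only points needing a little care are bookkeeping ones: for an abelian group the compatibility requirement on the operations amounts to translation invariance of $+$ (the unary $-$ is then forced to reverse the order), and the equivalence, for a submonoid $M$ of a group, between $M$ being conical in the sense $M^\ast = \{0\}$ and $M \cap (-M) = \{0\}$ — the latter being exactly what pairs up with antisymmetry.
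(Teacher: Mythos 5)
Your proof is correct and follows essentially the same route as the paper: the positive cone is shown to be a (conical) submonoid using translation invariance and transitivity, and conversely the quasi-order is defined by $a \preceq b$ iff $b-a \in M$, with conicity (equivalently $M \cap (-M) = \{0\}$) supplying antisymmetry in part (b). Your extra remarks on the order-reversing behaviour of negation and on cancellativity are fine and only make explicit what the paper leaves implicit.
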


    \begin{proof}
        Suppose that $M$ is a submonoid of $G$. Define the quasi-order as $a\preceq b$ if and only if $b-a \in M$. We have that $a-a \in M$ for $a \in G$, thus $\preceq$ is reflexive. Furthermore if $a\preceq b$ and $b\preceq c$, then $c-a= (c-b)+(b-a)\in M$. Finally if $a\preceq b$ and $c\in G$, we have $(b-c)+(c-a)=b-a\in M$. Now suppose that $M$ is the positive cone for an abelian qo-group $(G,\preceq)$. We must have $0\in M$ by reflexivity, and if $a$,$b\in M$ so $0\preceq a$ and $0\preceq b$. Now $b\preceq a+b$ by translation invariance, so $0 \preceq b \preceq a+b$ implies $0\preceq a+b$ by transitivity.

        In the special case where $M$ is a conical submonoid of $G$, we must have that if $b-a$, $a-b \in M$ then $(b-a)+(a-b)=0$, so $a-b=0$ and $a=b$. Therefore the quasi-ordering created is, in fact, a partial ordering. On the other hand, if $(G, \leq)$ is an abelian po-group, then if $0\leq a$ and $0\leq -a$, we must have $a\leq 0 \leq a$, so $a=0$. Finally, since $M$ is a submonoid of an abelian group $G$, it is necessarily canellative.
    \end{proof}

    \begin{proposition}
        Given an abelian group $G$ with a submonoid $M$ of $G$, we have that $M$ is the positive cone of an abelian po-group $(G,\leq)$ if and only if $M\setminus \{0\}$ is a subsemigroup of $G$.
    \end{proposition}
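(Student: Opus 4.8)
The plan is to reduce the statement to part~$(\mathrm{b})$ of Proposition~\ref{P:ConeCharacterization}, which asserts that $M$ is the positive cone of an abelian po-group $(G,\leq)$ if and only if $M$ is a conical monoid, i.e.\ $M^{*}=\{0\}$. Since $M$ is already assumed to be a submonoid of $G$, the whole problem collapses to verifying the purely combinatorial equivalence
$$M^{*}=\{0\} \quad\Longleftrightarrow\quad M\setminus\{0\}\text{ is a subsemigroup of }G,$$
and I would devote the proof entirely to this.

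For the forward implication I would assume $M$ is conical and take $a,b\in M\setminus\{0\}$; since $M$ is a submonoid we already know $a+b\in M$, so it suffices to rule out $a+b=0$. But $a+b=0$ would make $b=-a$ an additive inverse of $a$ lying in $M$, so $a\in M^{*}=\{0\}$, contradicting $a\neq 0$. Hence $a+b\in M\setminus\{0\}$, and $M\setminus\{0\}$ is closed under the group operation. (The degenerate case $M=\{0\}$ is harmless here: $M\setminus\{0\}=\varnothing$ is vacuously a subsemigroup.)

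For the converse I would assume $M\setminus\{0\}$ is a subsemigroup and pick any $a\in M^{*}$, with inverse $b\in M$, so $a+b=0$. If $a\neq 0$ then also $b=-a\neq 0$, so $a,b\in M\setminus\{0\}$; closure then gives $a+b\in M\setminus\{0\}$, i.e.\ $a+b\neq 0$, a contradiction. Therefore $a=0$, so $M^{*}=\{0\}$ and $M$ is conical. Invoking Proposition~\ref{P:ConeCharacterization}$(\mathrm{b})$ in both directions finishes the argument.

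I do not anticipate a genuine obstacle: once Proposition~\ref{P:ConeCharacterization}$(\mathrm{b})$ is available, the content is just the observation that, inside an abelian group, an element of a submonoid is a unit of that submonoid precisely when it has an additive inverse inside the submonoid, which for nonzero elements is exactly the failure of $M\setminus\{0\}$ to be closed under addition. The only point worth stating explicitly is that both the trivial monoid $M=\{0\}$ and the empty set behave correctly under this reformulation.
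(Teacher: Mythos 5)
Your proposal is correct and follows essentially the same route as the paper: both reduce the statement to Proposition~\ref{P:ConeCharacterization}(b) and then verify that, for a submonoid of an abelian group, being conical is equivalent to $M\setminus\{0\}$ being closed under addition. In fact you spell out the converse direction (subsemigroup $\Rightarrow$ conical), which the paper merely asserts, so your write-up is if anything slightly more complete.
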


    \begin{proof}
        Suppose that $M$ is the positive cone of an abelian po-group $(G,\leq)$. From Proposition \ref{P:ConeCharacterization} we know that $M$ is a conical monoid. Since $M$ is a monoid, it is closed under the binary operation, and since $M$ is conical, the set of nonzero elements is also closed under the binary operation. Thus $M\setminus\{0\}$ is a semigroup.

        On the other hand, if $M\setminus\{0\}$ is a subsemigroup of $G$, we have that $M$ is a conical monoid. Thus, there is a partial order $\leq$ such that $M$ is the positive cone of $(G, \leq)$.
    \end{proof}

    Examples of abelian po-groups abound in mathematics, and are easily recognizable to most. An explicit source of examples for abelian qo-groups should provide a better understanding of the difference between these two related concepts. In view of Proposition \ref{P:ConeCharacterization}, we may construct
    abelian qo-groups as follows.

    \begin{example}\label{E:finite}
        Given a finite abelian group $G$ and a nonzero submonoid  $M$ of $G$, the monoid $M$ is the positive cone of a qo-group $(G,\preceq)$ which is not an abelian
        po-group. This is because each element of $M$ must have torsion, as $M$ is finite.
    \end{example}

    In the same way that one may define an equivalence relation on a quasi-ordered set identifying all element pairs $a,b$ with $a\preceq b$ and $b\preceq a$, one may do the same for abelian qo-groups. The process of taking the quotient by this equivalence relation (which is, in fact, a congruence) is called the \emph{antisymmetrization} of the abelian qo-group.

    \begin{lemma}

        Consider an abelian qo-group $(G,\preceq)$ with positive cone $M$. Then for elements $a$, $b$ of $G$, the following are equivalent:

        \begin{enumerate}

        \item[$(\mathrm{a})$] $a \preceq b$ and $b \preceq a$;

        \item[$(\mathrm{b})$] $a+M^*=b+M^*$.

        \end{enumerate}

    \end{lemma}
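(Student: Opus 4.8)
The plan is to unwind both conditions using the description of $\preceq$ in terms of the positive cone established in Proposition~\ref{P:ConeCharacterization}(a), namely that $x \preceq y$ if and only if $y - x \in M$. With that dictionary in hand the lemma becomes a short computation with cosets of a subgroup.

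First I would observe that condition (a) says exactly that $b - a \in M$ and $a - b \in M$. Writing $m = b - a$, this is the statement that $m \in M$ and $-m \in M$, i.e.\ that $m \in M \cap (-M)$. Since $M$ is a submonoid of the group $G$, an element $m$ of $M$ is invertible in the monoid $M$ precisely when its group-theoretic inverse $-m$ also lies in $M$; thus $M^* = M \cap (-M)$. Along the way I would record that $M^*$ is a subgroup of $G$: it contains $0$, and it is closed under $+$ and under negation because each of these operations preserves membership in both $M$ and $-M$ simultaneously. Hence (a) is equivalent to $b - a \in M^*$.

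Next I would translate $b - a \in M^*$ into the coset language of (b). Since $M^*$ is a subgroup of the abelian group $G$, the cosets $a + M^*$ and $b + M^*$ are equal if and only if $b - a \in M^*$; this is the standard criterion for equality of cosets. Chaining the two equivalences yields (a) $\iff$ $b - a \in M^*$ $\iff$ (b), which is the assertion of the lemma.

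There is no serious obstacle here: the entire content is the passage between the quasi-order, its positive cone, and the group of units of that cone. The only point deserving an explicit line of justification is the identification $M^* = M \cap (-M)$ together with the observation that this set is a subgroup of $G$; once that is established the remaining equivalence is a one-line coset computation. I would also be careful to cite Proposition~\ref{P:ConeCharacterization}(a) for the fact that $M$ being the positive cone genuinely means $\preceq$ is recovered by subtraction into $M$, rather than silently re-deriving it.
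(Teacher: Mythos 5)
The paper states this lemma without proof, and your argument is correct and is exactly the standard one the paper implicitly relies on: condition (a) says $b-a\in M\cap(-M)=M^*$, and since $M^*$ is a subgroup of $G$ this is precisely the coset condition (b). The only nuance is that the dictionary $a\preceq b \iff b-a\in M$ is not the literal statement of Proposition~\ref{P:ConeCharacterization}(a) but follows in one line from translation invariance of the qo-group (add $\pm a$), which is the same fact used in that proposition's proof, so your citation should really be to compatibility of the order rather than to the proposition itself.
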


    \begin{proposition}\label{P:symmetrization}
        Given an abelian qo-group $(G,\preceq)$ with positive cone $M$, the quotient $(G/M^*, \preceq')$, is an abelian po-group with partial ordering
        $a+M^* \preceq' b + M^*$ if and only if $b-a \in M^*$.
    \end{proposition}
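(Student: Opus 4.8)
The plan is to check directly that the relation induced on $G/M^{*}$ by $\preceq$ is a partial order compatible with the quotient group structure, drawing antisymmetry from the Lemma just proved; I will also note a shorter route through Proposition~\ref{P:ConeCharacterization}(b).

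I would start with the preliminaries. Since $M$ is a submonoid of the abelian group $G$, its group of units $M^{*}=\{u\in M:-u\in M\}$ is a subgroup of $G$, so $G/M^{*}$ is an abelian group with quotient homomorphism $\pi\colon G\to G/M^{*}$. By translation invariance, $a\preceq b$ iff $0\preceq b-a$ iff $b-a\in M$, so $\preceq$ is precisely the quasi-order attached to the cone $M$ as in Proposition~\ref{P:ConeCharacterization}(a). I also need the observation that $M$ is a union of $M^{*}$-cosets: for $m\in M$ and $u\in M^{*}$ one has $m+u\in M$, and $m=(m+u)+(-u)$ with $-u\in M^{*}\subseteq M$ gives the converse, so $m\in M\iff m+u\in M$, and hence $M+M^{*}=M$.

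Next I would put $a+M^{*}\preceq' b+M^{*}$ iff $b-a\in M$ and verify this is well defined: if $a'-a,\ b'-b\in M^{*}$ then $(b'-a')-(b-a)\in M^{*}$, so $b-a\in M\iff b'-a'\in M$ by the coset observation. Reflexivity ($0\in M$), transitivity ($M$ closed under $+$), and compatibility with $+$ (from $(b+c)-(a+c)=b-a$) all transfer verbatim from $\preceq$. For antisymmetry, $a+M^{*}\preceq' b+M^{*}$ together with $b+M^{*}\preceq' a+M^{*}$ means $a\preceq b$ and $b\preceq a$, which by the Lemma is exactly $a+M^{*}=b+M^{*}$. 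Hence $(G/M^{*},\preceq')$ is an abelian po-group.

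A cleaner packaging of the same content: the image $\pi(M)=\{m+M^{*}:m\in M\}$ is a submonoid of $G/M^{*}$, and it is \emph{conical}. Indeed, if $m+M^{*}$ and $-m+M^{*}$ both lie in $\pi(M)$ (with $m\in M$), then $-m+M^{*}=m_{2}+M^{*}$ for some $m_{2}\in M$, so $-(m+m_{2})\in M^{*}$, hence $-m-m_{2}\in M$ and therefore $-m=(-m-m_{2})+m_{2}\in M$; thus $m\in M^{*}$, i.e.\ $m+M^{*}$ is the identity coset. Now Proposition~\ref{P:ConeCharacterization}(b) yields a po-group structure on $G/M^{*}$ with positive cone $\pi(M)$, and unwinding that correspondence (using $M+M^{*}=M$) identifies its order with $\preceq'$. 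The one genuinely nontrivial point in either version is this conicality computation --- equivalently, the invocation of the Lemma --- the rest being routine coset arithmetic. (I read the displayed relation as $b-a\in M$: the positive cone of the antisymmetrization is $\pi(M)$, and with $M^{*}$ in place of $M$ the relation would reduce to equality, so this reading is clearly the intended one.)
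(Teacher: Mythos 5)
Your proposal is correct, and its ``cleaner packaging'' is exactly the paper's proof: the paper verifies that $M/M^{*}$ is a conical submonoid of $G/M^{*}$ (by the same computation showing $a+b\in M^{*}$ with $a,b\in M$ forces $b\in M^{*}$) and then concludes via Proposition~\ref{P:ConeCharacterization}(b). Your additional direct verification (well-definedness of $\preceq'$, the coset identity $M+M^{*}=M$, antisymmetry via the Lemma) and your observation that the displayed condition should read $b-a\in M$ rather than $b-a\in M^{*}$ --- the latter would collapse the order to equality of cosets --- correctly supply details the paper leaves implicit.
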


    \begin{proof}
       First note that $M^*$ is a (normal) subgroup of $G$, so the quotient $G/M^*$ is an abelian group. It is clear that $M/M^*$ is a submonoid of $G/M^*$. We now verify that $M/M^*$ is conical. Suppose that $a+b+M^*=M^*$ for $a,b \in M$. That is, $a+b \in M^*$. Then $-b-a \in M^*\subset M$, so $-b-a+a=-b\in M$, whence $b \in M^*$ and $(G/M^*,\preceq')$ is an abelian po-group.
    \end{proof}

    In light of Example \ref{E:finite}, the antisymmetrization of a finite abelian qo-group is trivially ordered. We also have that antisymmetrization is idempotent.

    The discussion of the positive cone for an abelian po-group reveals an intrinsic connection between the two. Since the morphisms $f$ of $\mathbf{QAb}$ are order-preserving, we have that $0\preceq f(a)$ for any element $a$ of the positive cone. So we see that the order-preserving morphisms are precisely
    the group homomorphisms mapping positive cones into positive cones.

    \begin{definition}
        Let $$C: \mathbf{QAb} \rightarrow \mathbf{Mon}$$ be the functor from abelian qo-groups to monoids which assigns to each abelian qo-group its positive cone, and to each order-preserving morphism, the corresponding monoid homomorphism between the positive cones.
        Let $$U': \mathbf{QAb} \rightarrow \mathbf{Mon}$$ be the forgetful functor which forgets both the inverses and quasi-ordering.
        Finally, let $$\tau: C\rightarrow U'$$ be the natural transformation between these functors with components as in Figure \ref{F:transformation}.
    \end{definition}

    \begin{figure}

        $$\xymatrix{ G\ar[d]^f & G^+_0\ar[d]^{Cf}\ar[r]^{\tau_G} & U'G\ar[d]^{U'f} \\ H & H^+_0\ar[r]^{\tau_H} & U'H}$$

    \caption{A morphism of $\mathbf{QAb}^i$}\label{F:transformation}
    \end{figure}

    The components of $\tau$ at an abelian qo-group will be the objects of a category $\mathbf{QAb}^i$ with vertical composition of the maps
    as described in \cite{MR1712872}, where each map is induced by an order-preserving homomorphism $f:G\rightarrow H$ as in Figure \ref{F:transformation}.
    The most important observation is that $\mathbf{QAb}^i$ is isomorphic to $\mathbf{QAb}$ itself.

    \begin{theorem}\label{T:qabqabi}
        There is an isomorphism of categories between $\mathbf{QAb}$ and $\mathbf{QAb}^i$.
    \end{theorem}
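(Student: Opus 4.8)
The plan is to produce a pair of mutually inverse functors $\Phi\colon\mathbf{QAb}\to\mathbf{QAb}^i$ and $\Psi\colon\mathbf{QAb}^i\to\mathbf{QAb}$, so that, for instance, $\Phi$ is an isomorphism of categories. On objects, $\Phi$ sends an abelian qo-group $(G,\preceq)$ to the component $\tau_G\colon G^+_0\to U'G$, which is by definition an object of $\mathbf{QAb}^i$; on a morphism $f\colon G\to H$ it returns the commutative square of Figure~\ref{F:transformation}, that is, the pair $(Cf,U'f)$. Well-definedness on morphisms is precisely the naturality of $\tau$, which makes the square commute, and functoriality of $\Phi$ is inherited from the functoriality of $C$ and of $U'$ together with the fact that composition in $\mathbf{QAb}^i$ is performed componentwise (vertically).

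In the other direction, given an object $\iota=\tau_G\colon G^+_0\hookrightarrow U'G$ of $\mathbf{QAb}^i$, I would let $\Psi(\iota)$ be the unique abelian group whose additive monoid is $U'G$ (additive inverses in a monoid, when they exist, are unique, so the group structure is determined), equipped with the relation $a\preceq b$ iff $b-a\in G^+_0$. Since $G^+_0$ is a submonoid, Proposition~\ref{P:ConeCharacterization}(a) guarantees that this is indeed an abelian qo-group and that its positive cone is exactly $G^+_0$. On a morphism $(g,h)\colon\tau_G\to\tau_H$ the map $h\colon U'G\to U'H$ is a monoid homomorphism between the additive reducts of abelian groups, hence automatically a group homomorphism; and commutativity of the defining square says that the restriction of $h$ to $G^+_0$ is $g$ followed by the inclusion, so $h$ carries $G^+_0$ into $H^+_0$ and is therefore order-preserving. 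Setting $\Psi(g,h):=h$ gives a functor.

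It then remains to verify $\Psi\Phi=\mathrm{id}_{\mathbf{QAb}}$ and $\Phi\Psi=\mathrm{id}_{\mathbf{QAb}^i}$. For the first composite, on morphisms $\Psi\Phi(f)=\Psi(Cf,U'f)=U'f=f$, while on objects one must check that the reconstructed order $a\preceq b\iff b-a\in G^+_0$ coincides with the original ordering of $(G,\preceq)$; this is the one genuinely order-theoretic point, and it comes from translation invariance: adding $-a$ gives $a\preceq b$ iff $a-a\preceq b-a$ iff $0\preceq b-a$ iff $b-a\in G^+_0$. For the second composite, $\Phi\Psi(\tau_G)=\tau_G$ because by Proposition~\ref{P:ConeCharacterization}(a) the positive cone of the qo-group reconstructed from $G^+_0$ is again $G^+_0$, with the same inclusion into $U'G$; and $\Phi\Psi(g,h)=(Ch,U'h)=(g,h)$, using $U'h=h$ and the fact, again from commutativity of the square, that $Ch$ --- the restriction of $h$ to positive cones --- equals $g$.

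Thus the theorem amounts to unwinding the definitions, and the two places where care is needed are: (i) that the quasi-ordering of $G$ is recovered from the single datum of its positive cone, which rests on translation invariance and is exactly what makes the assignment $(G,\preceq)\mapsto G^+_0$ injective on objects; and (ii) the bookkeeping that a morphism of $\mathbf{QAb}^i$ encodes neither more nor less than an order-preserving group homomorphism, which uses that a monoid map between additive reducts of abelian groups is necessarily a group map. Neither is difficult, and I expect (i) to be the conceptual heart of the argument.
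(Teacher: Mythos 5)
Your proposal is correct and follows essentially the same route as the paper: the paper's functors $A$ and $B$ are exactly your $\Phi$ and $\Psi$, with $AB=1_{\mathbf{QAb}^i}$ and $BA=1_{\mathbf{QAb}}$ checked directly. You simply spell out the details the paper leaves as ``readily observed,'' notably that translation invariance recovers the quasi-order from the positive cone and that the square of monoid maps encodes precisely an order-preserving group homomorphism.
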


    \begin{proof}
        Consider the functor $A:\mathbf{QAb}\rightarrow \mathbf{QAb}^i$ which assigns, to each order-preserving morphism on the left of Figure \ref{F:transformation}, the commuting diagram on the right. Further, consider the functor $B:\mathbf{QAb}^i\rightarrow \mathbf{QAb}$, which assigns, to any component of the natural transformation $\tau_G$, an abelian qo-group $G$ with positive cone $C(G)$. The functor $B$ sends the commuting diagram on the right of Figure \ref{F:transformation} to the morphism on the left.
        It is readily observed that $AB=1_{\mathbf{QAb}^i}$ and $BA=1_{\mathbf{QAb}}$. Therefore the two categories are isomorphic.
    \end{proof}

     A component $\tau_G$ of the natural transformation $\tau$ can be considered as the function that inserts the positive cone of the abelian qo-group $G$ into the group. We see that instead of taking the group as the main object of interest in our study of abelian qo-groups, we may instead concentrate on the monoid insertion $\tau_G$. This technique will be used to establish a monadic relationship between abelian qo-groups and a base category in Section~\ref{S:Adjunction}.

\section{Set monomorphisms}\label{S:BaseCategory}

    In Section \ref{S:OrderedGroups} we examined some of the fundamental properties of ordered abelian groups, and reexamined the relationship between the positive cone of an abelian qo-group and the group itself. Specifically, we saw that the category $\mathbf{QAb}$ is isomorphic to the category $\mathbf{QAb}^i$, the category of components of the natural transformation $\tau:C\rightarrow U'$ with vertical composition of the induced maps. It is now possible to relate the category $\mathbf{QAb}^i$ to suitable base categories, the categories of set insertions and set monomorphisms. Further, we develop a more general category $\mathbf{QAb}^m$ which will be seen to be equivalent to $\mathbf{QAb}^i$.

    \begin{definition}\label{D:base}
        The objects of the \emph{category} $\mathbf{Set}^i$ \emph{of set insertions} are the insertions $i:X'\hookrightarrow X$ of subsets into supersets. The morphisms of the category are pairs of set maps
        $$(f_1, f_2):(i:X'\hookrightarrow X)\rightarrow (j:Y'\hookrightarrow Y)$$
        where $f_1:X\rightarrow Y$, $f_2:X'\rightarrow Y'$, and
        $j\circ f_2=f_1\circ i$. The composition is clearly associative by the inspection and verification of the commuting squares of Figure~2
        and regarding $f\circ g=(f_1\circ g_1,f_2\circ g_2)$.
    \end{definition}

    \begin{figure}[hbt]
        $$\xymatrix{X'\ar@{^(->}[r]^{i}\ar[d]_{f_2} & X\ar[d]^{f_1} \\ Y'\ar@{^(->}[r]^{j} & Y}$$
        \caption{A morphism of $\mathbf{Set}^i$}
    \end{figure}\label{F:setmorph}

    In Definition \ref{D:base} we have set insertions as the objects of the category. In the current form it would be acceptable to view each of the objects as a pair, say $(X',X)$ with no real confusion as to how $X'$ is identified inside of the superset $X$. It is possible to generalize the situation by defining an equivalent related category.

    \begin{definition}\label{D:ibase}
        The objects of the \emph{category} $\mathbf{Set}^m$ \emph{of set monomorphisms} are injective set maps $i:X'\rightarrow X$. The morphisms of the category are pairs of set maps
        $$(f_1, f_2):(i:X'\rightarrow X)\rightarrow (j:Y'\rightarrow Y)$$
        where $f_1:X\rightarrow Y$, $f_2:X'\rightarrow Y'$, and
        $j\circ f_2=f_1\circ i$. Composition of morphisms in $\mathbf{Set}^m$ is given by the same ``componentwise'' composition as defined for $\mathbf{Set}^i$.
    \end{definition}

    \begin{theorem}\label{T:setequiv}
        There is an equivalence of categories between $\mathbf{Set}^i$ and $\mathbf{Set}^m$.
    \end{theorem}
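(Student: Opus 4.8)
The plan is to exhibit the obvious ``do nothing'' functor $E\colon\mathbf{Set}^i\to\mathbf{Set}^m$ and to show it is an equivalence by checking that it is fully faithful and essentially surjective. On objects, $E$ sends an insertion $i\colon X'\hookrightarrow X$ of a subset into a superset to the same map regarded merely as an injective set map; on morphisms, $E$ sends a pair $(f_1,f_2)$ to itself. This is well defined because the defining condition $j\circ f_2=f_1\circ i$ is verbatim the same in the two categories, and composition and identities in both categories are computed componentwise, so $E$ is a functor. For the same reason $E$ is faithful (it is injective on each hom-set) and full: a morphism of $\mathbf{Set}^m$ from $E(i\colon X'\hookrightarrow X)$ to $E(j\colon Y'\hookrightarrow Y)$ is by definition a pair $(f_1,f_2)$ with $j\circ f_2=f_1\circ i$, which is exactly a morphism of $\mathbf{Set}^i$ between the original objects.

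It then remains to verify essential surjectivity. Given an arbitrary object $i\colon X'\to X$ of $\mathbf{Set}^m$, let $i(X')\subseteq X$ denote its image and $\iota\colon i(X')\hookrightarrow X$ the honest inclusion; then $\iota$ is an object of $\mathbf{Set}^i$, and I claim $E(\iota)=\iota$ is isomorphic to $i$ in $\mathbf{Set}^m$. Let $\bar{\imath}\colon X'\to i(X')$ be the corestriction of $i$ to its image, a bijection since $i$ is injective. The pair $(\mathrm{id}_X,\bar{\imath})\colon(i\colon X'\to X)\to(\iota\colon i(X')\hookrightarrow X)$ is a morphism of $\mathbf{Set}^m$, since $\iota\circ\bar{\imath}=i=\mathrm{id}_X\circ i$; and $(\mathrm{id}_X,{\bar{\imath}}^{-1})$ is a morphism in the other direction, since $i\circ{\bar{\imath}}^{-1}=\iota=\mathrm{id}_X\circ\iota$. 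These are mutually inverse, so $i\cong\iota=E(\iota)$. Since a fully faithful and essentially surjective functor is an equivalence, this finishes the proof.

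There is essentially no obstacle here; the two points deserving care are the observation that the hom-sets of $\mathbf{Set}^i$ and $\mathbf{Set}^m$ are governed by literally the same commuting square (making fullness and faithfulness automatic) and the correct identification of isomorphisms in $\mathbf{Set}^m$. If a choice-free argument or an explicit quasi-inverse is preferred, one can instead define $R\colon\mathbf{Set}^m\to\mathbf{Set}^i$ on objects by $R(i\colon X'\to X)=(\iota\colon i(X')\hookrightarrow X)$ and on a morphism $(f_1,f_2)\colon(i\colon X'\to X)\to(j\colon Y'\to Y)$ by $R(f_1,f_2)=(f_1,f_2')$, where $f_2'\colon i(X')\to j(Y')$ is the restriction of $f_1$ (well defined because $f_1(i(x'))=j(f_2(x'))\in j(Y')$). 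One checks that $R$ is a functor, that $R\circ E=1_{\mathbf{Set}^i}$ on the nose (an inclusion coincides with the corestriction of itself to its image), and that the morphisms $(\mathrm{id}_X,\bar{\imath})$ above assemble into a natural isomorphism $1_{\mathbf{Set}^m}\cong E\circ R$, the naturality square reducing to the equation $\bar{\jmath}\circ f_2=f_2'\circ\bar{\imath}$.
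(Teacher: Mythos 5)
Your proof is correct and follows essentially the same route as the paper: you take the forgetful functor $\mathbf{Set}^i\to\mathbf{Set}^m$, check it is fully faithful (immediate, since the hom-sets are defined by the identical commuting square) and essentially surjective via the image insertion $i(X')\hookrightarrow X$, which is exactly the paper's functors $A$ and $B$. Your explicit isomorphism $(\mathrm{id}_X,\bar{\imath})$ and the optional quasi-inverse $R$ just spell out details the paper leaves brief.
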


    \begin{proof}
        Define a functor $$A:\mathbf{Set}^i\rightarrow \mathbf{Set}^m$$ on the morphisms as $A(i:X'\hookrightarrow X)=Ai:X'\rightarrow X$, which forgets that set insertions are insertions and simply considers them as set monomorphisms. Define $B:\mathbf{Set}^m\rightarrow \mathbf{Set}^i$ as $B(i:X'\rightarrow X)=Bi: i(X')\hookrightarrow X$, which is the insertion of the image of $i$ into the set $X$.

        We must show that $A$ and $B$ constitute an equivalence of categories. We show that $A$ is full, faithful, and dense in $\mathbf{Set}^m$. Suppose $f,g\in \mathbf{Set}^i(i,j)$ with $f\not= g$. We have that $f_1$ and $g_1$ must differ on at least one element of $X$, the codomain of $i$. This is because $f$ and $g$ are uniquely determined by their definition on $X$, as described in \cite{MR2377903}. Then, since $Af$ and $Ag$ return the same maps $f_1$ and $g_1$ on $X$, we can be sure that they are not the same.

        Now consider $f \in {\mathbf{Set}^m}(Ai,Aj)$. The morphism $f$ is defined by its two components $f_1$ on $X$ and $f_2$ on $X'$. Since $i$ and $j$ are monomorphisms, they are injective. Thus the definition of $f_1$ uniquely determines the definition of $f_2$ via $j\circ f_2= f_1 \circ i$. Therefore $f=Af'$ for some $f'\in  {\mathbf{Set}^m}(i,j)$.

        Finally, we have that $A$ is dense since, given any object $i:X'\rightarrow X$, we have that $i\cong ABi$, by virtue of the morphism induced from the identity map on $X$.
    \end{proof}

    Recall that in $\mathbf{QAb}^i$ we may think of each of the components of $\tau$ as the insertion map of the positive cone $G^+_0$ into the group $G$. While this does provide us with quite a succinct way to view $\mathbf{QAb}$ through an isomorphic category, we may use the principle outlined in Theorem~\ref{T:setequiv} to define a category $\mathbf{QAb}^m$ equivalent to $\mathbf{QAb}^i$.

    \begin{figure}[hbt]
        $$\xymatrix{M\ar[r]^{i}\ar[d]_{f_2} & U'G\ar[d]^{U'f_1} \\ N\ar[r]^{j} & U'H}$$
        \caption{A morphism of $\mathbf{QAb}^m$}
    \end{figure}\label{F:qabmmorphism}

    \begin{definition}
        Let the objects of $\mathbf{QAb}^m$ be monoid monomorphisms $i: M \rightarrow U'G$, where $U'G$ is the monoid reduct of an abelian group $G$. Let the morphisms of $\mathbf{QAb}^m$ be commuting squares as in Figure~\ref{F:qabmmorphism}, induced by a group homomorphism $f:G\rightarrow H$ with componentwise composition similar to that of $\mathbf{Set}^m$.
    \end{definition}

    \begin{theorem}\label{T:qabequiv}
        There is an equivalence of categories between $\mathbf{QAb}^i$ and $\mathbf{QAb}^m$.
    \end{theorem}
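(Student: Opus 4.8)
The plan is to mimic the proof of Theorem~\ref{T:setequiv}, now carrying the abelian group structure along. First I would define a functor $A':\mathbf{QAb}^i\rightarrow\mathbf{QAb}^m$ that takes a component $\tau_G:G^+_0\hookrightarrow U'G$ and forgets that it is the insertion of a \emph{subset}, recording it merely as a monoid monomorphism into $U'G$; on a morphism of $\mathbf{QAb}^i$, i.e.\ a commuting diagram as on the right of Figure~\ref{F:transformation} induced by an order-preserving $f:G\rightarrow H$, it returns the commuting square of Figure~\ref{F:qabmmorphism} induced by the same group homomorphism. Conversely I would define $B':\mathbf{QAb}^m\rightarrow\mathbf{QAb}^i$ by sending a monoid monomorphism $i:M\rightarrow U'G$ to the insertion $i(M)\hookrightarrow U'G$ of its image. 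The one point needing care here is that $i(M)\hookrightarrow U'G$ really is an object of $\mathbf{QAb}^i$: since $i$ is a monoid homomorphism, $i(M)$ is a submonoid of $U'G$, so Proposition~\ref{P:ConeCharacterization}(a) furnishes a quasi-ordering $\preceq$ on $G$ with $G^+_0=i(M)$, and $B'i$ is then the component $\tau_G$ attached to $(G,\preceq)$. On morphisms $B'$ corestricts the square of Figure~\ref{F:qabmmorphism} to the images of the two cones.

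Next I would check that $A'$ is faithful, full and dense, reusing the bookkeeping of Theorem~\ref{T:setequiv}. For faithfulness, a morphism of $\mathbf{QAb}^i$ is determined by its underlying group homomorphism $f_1=U'f$, and $A'$ retains exactly this datum. For fullness, let $(f_1,f_2)\colon A'\tau_G\rightarrow A'\tau_H$ be a morphism of $\mathbf{QAb}^m$, induced by a group homomorphism $f$ with $f_1=U'f$; the square $j\circ f_2=f_1\circ i$, in which $i$ and $j$ are now the honest inclusions of $G^+_0$ and $H^+_0$, forces $f_2$ to be the corestriction of $f_1$ and, in particular, shows that $f_1$ carries $G^+_0$ into $H^+_0$. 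Recalling from Section~\ref{S:OrderedGroups} that the order-preserving group homomorphisms are precisely those carrying positive cones into positive cones, $f$ is order-preserving, hence induces a morphism of $\mathbf{QAb}^i$ whose image under $A'$ is $(f_1,f_2)$. For density, given an object $i:M\rightarrow U'G$ of $\mathbf{QAb}^m$, the pair $(1_{U'G},\bar{\imath})$, where $\bar{\imath}:M\rightarrow i(M)$ is the monoid isomorphism obtained by corestricting $i$, is an isomorphism $i\cong A'B'i$ in $\mathbf{QAb}^m$ induced by $1_G$.

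Assembling these componentwise isomorphisms into natural isomorphisms $1_{\mathbf{QAb}^i}\cong B'A'$ and $A'B'\cong 1_{\mathbf{QAb}^m}$ --- exactly as in Theorem~\ref{T:setequiv} --- exhibits $A'$ as an equivalence with quasi-inverse $B'$. I expect the only genuine subtlety to be the fullness step: one must be sure that an arbitrary $\mathbf{QAb}^m$-morphism between the ``forgotten'' cones is order-preserving when read back in $\mathbf{QAb}^i$, and this is precisely where the commuting square together with the monomorphism hypothesis on $j$ does the work. Everything else is the same routine diagram chasing already carried out for $\mathbf{Set}^i$ and $\mathbf{Set}^m$.
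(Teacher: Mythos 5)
Your proposal is correct and follows essentially the same route as the paper: the paper defines the same pair of functors (forget the insertion; send $i:M\rightarrow U'G$ to the qo-group on $G$ with positive cone $i(M)$) and then declares the equivalence a special case of Theorem~\ref{T:setequiv}, which is exactly the full/faithful/dense verification you carry out. Your write-up merely supplies the details the paper leaves implicit, in particular the appeal to Proposition~\ref{P:ConeCharacterization}(a) and the fullness argument that the commuting square forces order preservation.
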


    \begin{proof}
        Define $A:\mathbf{QAb}^i\rightarrow \mathbf{QAb}^m$ to be the functor that forgets that Figure~\ref{F:transformation} came from a natural transformation, and simply considers it as a diagram in $\mathbf{Mon}$. Define $B:\mathbf{QAb}^m\rightarrow \mathbf{QAb}^i$ to be the functor that assigns, to each monoid monomorphism $i:M\rightarrow U'G$, the component $\tau_G$ where $G$ is the abelian qo-group with positive cone $i(M)\subseteq G$. The morphism part of $B$ simply assigns the morphism of $\mathbf{QAb}^i$ corresponding to the abelian qo-group morphism induced by $f:G\rightarrow H$.
        The fact that $A$ and $B$ form an equivalence is simply a special case of Theorem \ref{T:setequiv}.
    \end{proof}

    In defining $\mathbf{QAb}^m$ as we have, we have moved the emphasis from the actual group $G$ and the submonoid $M$ of the positive cone to the way in which the elements of some monoid $M$ are identified inside of $G$. In particular, the existence of a monoid monomorphism $i:M\rightarrow U'G$ implies that $M$ is commutative and cancellative. The focus on the identification provides a sufficiently abstracted framework for us to discuss a monadic adjunction between $\mathbf{QAb}^m$ and $\mathbf{Set}^m$ in Section \ref{S:Adjunction}. Before we describe such an adjunction, let us examine some specific objects of $\mathbf{QAb}^m$.

    \begin{example}\label{E:qogroups}
    In each of the following cases, we consider a qo-group structure on $\mathbb{Z}$.

    \begin{enumerate}

        \item We may identify the abelian l-group of $\mathbb{Z}$ under the usual order as the object $i:\mathbb{N}\rightarrow U'\mathbb{Z}$, where $i$ is the usual inclusion.

        \item Let $2:\mathbb{N}\rightarrow U'\mathbb{Z}$ be the map that doubles every element and inserts it into the group $\mathbb{Z}$. We obtain the abelian po-group $(\mathbb{Z},\le)$ with
            $m\leq n$ if and only if $n-m =2k$ for $k\in \mathbb{N}$.

        \item Letting $2:\mathbb{Z}\rightarrow \mathbb{Z}$ be a doubling map again, the resulting abelian qo-group is $(\mathbb{Z},\preceq)$, where $m\preceq n$ if and only if $n-m =2k$ for $k\in \mathbb{Z}$.

    \end{enumerate}
    \end{example}

    Example~\ref{E:qogroups} shows that it is not the  positive cone monoid itself that is important for the structure of an abelian qo-group, but rather the way in which that cone is identified inside the group. This is why we take monomorphisms as the objects, and not just the pairs $(M,G)$.

    In the special case of abelian po-groups, we define categories corresponding to $\mathbf{QAb}^i$ and $\mathbf{QAb}^m$.

    \begin{definition}
        The category $\mathbf{PAb}^i$ is the full subcategory of $\mathbf{QAb}^i$ where all the positive cones are positive cones of abelian po-groups. The category $\mathbf{PAb}^m$ is the full subcategory of $\mathbf{QAb}^m$ where all the monoids are conical.
    \end{definition}

    \begin{proposition}\label{P:pabisomom}
        $\mathbf{PAb}^i$ is equivalent to $\mathbf{PAb}^m$ and isomorphic to $\mathbf{PAb}$.
    \end{proposition}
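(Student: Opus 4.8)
The strategy is to show that the functors constructed in Theorems~\ref{T:qabqabi} and~\ref{T:qabequiv} restrict to the full subcategories in play. Note first that $\mathbf{PAb}$ is a full subcategory of $\mathbf{QAb}$, since every abelian po-group is an abelian qo-group and the morphisms of both categories are the order-preserving group homomorphisms.

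For the isomorphism $\mathbf{PAb}^i\cong\mathbf{PAb}$, I would take the mutually inverse functors $A:\mathbf{QAb}\to\mathbf{QAb}^i$ and $B:\mathbf{QAb}^i\to\mathbf{QAb}$ of Theorem~\ref{T:qabqabi} and check that each carries one subcategory into the other. If $G$ is an abelian po-group, its positive cone is a conical monoid by Proposition~\ref{P:ConeCharacterization}(b), so the component $\tau_G=A(G)$ lies in $\mathbf{PAb}^i$; conversely $B$ sends an object of $\mathbf{PAb}^i$ to an abelian qo-group with conical positive cone, hence, again by Proposition~\ref{P:ConeCharacterization}(b), to an abelian po-group in $\mathbf{PAb}$. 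As $\mathbf{PAb}$ and $\mathbf{PAb}^i$ are full and the restrictions of $A$ and $B$ stay strictly mutually inverse, we obtain $\mathbf{PAb}^i\cong\mathbf{PAb}$. For the equivalence $\mathbf{PAb}^i\simeq\mathbf{PAb}^m$, I would argue the same way with the quasi-inverse functors $A:\mathbf{QAb}^i\to\mathbf{QAb}^m$ and $B:\mathbf{QAb}^m\to\mathbf{QAb}^i$ of Theorem~\ref{T:qabequiv}: the functor $A$ only re-reads the monoid insertion $G^+_0\hookrightarrow U'G$ as a diagram of monoids, so a conical cone stays conical; and for $i:M\to U'G$ with $M$ conical, the image $i(M)$ is a submonoid of $G$ isomorphic to $M$, hence conical, so by Proposition~\ref{P:ConeCharacterization}(b) it is the positive cone of a po-group structure on $G$, placing $B(i)$ in $\mathbf{PAb}^i$. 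The unit and counit isomorphisms witnessing the equivalence of Theorem~\ref{T:qabequiv} restrict to these full subcategories (for any object $c$ of the subcategory, both $c$ and its image under $BA$ or $AB$ lie there, and the subcategory is full), which gives the equivalence; combining the two halves proves the proposition.

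The only point needing care is the bookkeeping that conicality --- plainly an isomorphism invariant of monoids, since units correspond to units under an isomorphism --- matches up exactly with the po-group condition on positive cones via Proposition~\ref{P:ConeCharacterization}(b). Once that identification is made, the remaining claims are the routine facts that a functor sending full subcategories into one another restricts its (quasi-)inverse and the associated natural transformations along with it, so I do not anticipate a real obstacle.
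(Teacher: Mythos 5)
Your proposal is correct and follows essentially the same route as the paper: the paper's proof simply restricts the functors of Theorems~\ref{T:qabqabi} and~\ref{T:qabequiv} to the subcategories in question, and your write-up supplies the verification (via Proposition~\ref{P:ConeCharacterization}(b) and fullness) that these restrictions are well defined and remain mutually (quasi-)inverse. No gap.
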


    \begin{proof}
        Recall the functors $A$ and $B$ in Theorem~\ref{T:qabequiv}. Take the restrictions of the functors to the respective subcategories $\mathbf{PAb}^i$ and $\mathbf{PAb}^m$. For the second statement, take the restrictions of the functors of Theorem~\ref{T:qabqabi} to the respective subcategories $\mathbf{PAb}^i$ and $\mathbf{PAb}^m$.
    \end{proof}

    In defining $\mathbf{QAb}^i$ and $\mathbf{PAb}^i$, we identified the properties that a monoid must have in order for it to be the positive cone of an abelian po-group or an abelian qo-group. Namely, we saw that an arbitrary submonoid of a group could be the positive cone of an abelian qo-group, and an arbitrary conical monoid could be the positive cone of an abelian po-group. Example~\ref{E:qogroups} shows that there is no comparable characterization of the monoids which may be the positive cone of an abelian l-group, since Example~\ref{E:qogroups}(1) and Example~\ref{E:qogroups}(2) have isomorphic monoids as their positive cones, while the first example is an l-group and the second is not. In order to define categories $\mathbf{LAb}^i$ and $\mathbf{LAb}^m$ corresponding to $\mathbf{QAb}^i$ and $\mathbf{QAb}^m$ for lattice-ordered abelian groups, we thus use extrinsic properties rather than an internal definition relying on positive cones.

    \begin{definition}
        Let $\mathbf{LAb}^i$ be the category whose objects are insertions $i: G_0^+\hookrightarrow G$ of the positive cone of an abelian l-group into the abelian l-group, and whose morphisms are pairs $f=(f_1,f_2)$, where $f_1$ is an abelian l-group morphism and $f_2$ is the restriction of $f_1$ to the positive cone.

        Similarly, define $\mathbf{LAb}^m$ be the category whose objects are monoid monomorphisms $i: M\rightarrow G$ of $M$ onto the positive cone of an abelian l-group $G$. The morphisms of $\mathbf{LAb}^m$ are pairs $f=(f_1, f_2)$, where $f_1$ is an abelian l-group morphism and $f_2$ is a monoid morphism, with $j\circ f_2= f_1 \circ i$.
    \end{definition}

    \begin{figure}
        $$\xymatrix{
        \mathbf{PAb}\ar[r]^{U_{PQ}} & \mathbf{QAb}\ar[r]^{U_Q} & \mathbf{Set} & \mathbf{LAb}\ar[l]_{U_L}\\ \mathbf{PAb}^i\ar[r]^{U_{PQ}^i}\ar[d]^{U_P^{im}}\ar[u]_{U_P^i} & \mathbf{QAb}^i\ar[u]_{U_{QQ}^i}\ar[r]^{U_Q^i}\ar[d]^{U_Q^{im}} & \mathbf{Set}^i\ar[u]_{U^i}\ar[d]^{U^{im}} & \mathbf{LAb}^i\ar[u]_{U_{LL}^i}\ar[d]^{U_L^{im}}\ar[l]_{U_{L}^i}  \\ \mathbf{PAb}^m\ar[r]^{U_{PQ}^m} & \mathbf{QAb}^m\ar[r]^{U_Q^m} & \mathbf{Set}^m & \mathbf{LAb}^m\ar[l]_{U_{L}^m}}  $$
        \caption{Forgetful Functors}\label{F:forgetfuldiagram}
    \end{figure}

    \begin{remark}[Notational conventions]
        In the rest of this section, a functor denoted by a decorated $U$ will be a forgetful functor. A left adjoint to one of these will be denoted by $A$ with the same subscripts and superscripts. The subscripts $P$, $L$, and $Q$ denote the order structure on the domain and codomain of the forgetful functor. If there is only one letter in the subscript, either the codomain is related to $\mathbf{Set}$ or the order type is not changing. The superscripts $i$ and $m$ denote insertions or monomorphisms for the domain and codomain. If there is only one superscript, the functor disregards the insertion.
    \end{remark}

    There is a collection of evident forgetful functors between all of the categories discussed so far, as recorded in Figure~\ref{F:forgetfuldiagram}. We have already seen some of the left adjoints to these functors, and some are particularly well known. We shall conclude this section by indicating all of the adjoints to these forgetful functors. The forgetful functors $U_L^i$ and $U_L^m$ have not yet been defined, but will also be seen as right adjoints by the end of this section.

    \begin{corollary}\label{C:doneadjunctions}
         Consider the forgetful functors of Figure~\ref{F:forgetfuldiagram}. Then $U_{LL}^i, U_{P}^i,$ and $U_{QQ}^i$ are all isomorphisms of categories. Furthermore, the forgetful functors $U_L^{im}, U_P^{im}, U_Q^{im},$ and $U^{im}$ provide equivalences of categories.
    \end{corollary}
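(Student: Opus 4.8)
The plan is to observe that, of the seven claims, only two carry genuinely new content. The functor $U_{QQ}^i$ is the functor $B$ of Theorem~\ref{T:qabqabi}, so it is an isomorphism of categories; the functors $U_Q^{im}$ and $U^{im}$ are the functors $A$ of Theorems~\ref{T:qabequiv} and~\ref{T:setequiv} respectively, so they are equivalences. For $U_P^i$ and $U_P^{im}$ I would simply restrict these same functors to the full subcategories $\mathbf{PAb}^i$ and $\mathbf{PAb}^m$, exactly as in the proof of Proposition~\ref{P:pabisomom}. The two remaining statements — that $U_{LL}^i:\mathbf{LAb}^i\to\mathbf{LAb}$ is an isomorphism and that $U_L^{im}:\mathbf{LAb}^i\to\mathbf{LAb}^m$ is an equivalence — need separate (though routine) arguments, because $\mathbf{LAb}$ is not a full subcategory of $\mathbf{QAb}$: an $\mathbf{LAb}$-morphism must preserve the lattice operations, not merely the order, and this is why $\mathbf{LAb}^i$ and $\mathbf{LAb}^m$ were defined extrinsically rather than as subcategories of $\mathbf{QAb}^i$ and $\mathbf{QAb}^m$.

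For $U_{LL}^i$ I would imitate the proof of Theorem~\ref{T:qabqabi}: define a functor $\mathbf{LAb}\to\mathbf{LAb}^i$ sending an abelian l-group $G$ to the insertion $G^+_0\hookrightarrow G$ and sending an l-group morphism $f:G\to H$ to the pair $(f,f|_{G^+_0})$. The one point to check is that $f$ does restrict to a monoid morphism of positive cones, which holds since $f$ is order-preserving, so $f(G^+_0)\subseteq H^+_0$, and the defining square commutes. Since an object of $\mathbf{LAb}^i$ is determined by its underlying l-group and a morphism is determined by its first component, both composites of this functor with $U_{LL}^i$ are identities, so $U_{LL}^i$ is an isomorphism of categories.

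For $U_L^{im}$ I would imitate the proof of Theorem~\ref{T:setequiv}: define $B:\mathbf{LAb}^m\to\mathbf{LAb}^i$ sending a monoid monomorphism $i:M\to G$ onto the positive cone of the l-group $G$ to the insertion $i(M)\hookrightarrow G$, which is just $G^+_0\hookrightarrow G$, and sending a morphism $(f_1,f_2)$ to $(f_1,f_1|_{G^+_0})$, well defined since $f_1\circ i=j\circ f_2$ forces $f_1(G^+_0)\subseteq H^+_0$. Then $B\circ U_L^{im}=1_{\mathbf{LAb}^i}$ on the nose, while $U_L^{im}\circ B$ is naturally isomorphic to $1_{\mathbf{LAb}^m}$ via the isomorphism $(1_G,i):(i:M\to G)\to(G^+_0\hookrightarrow G)$, so $U_L^{im}$ is an equivalence.

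I do not expect a real obstacle; the argument is bookkeeping. The only care needed is to treat the two $\mathbf{LAb}$ cases by hand rather than by restriction, to check that the various restricted maps actually land in the positive cones, and to note that the morphism part of $B$ does not depend on the chosen parametrizing monoid $M$ (which uses that $i$ and $j$ are isomorphisms onto their cones). The apparent worry that the lattice-morphism requirement might break the argument does not materialize, since every functor involved touches only the group and monoid structure and leaves the operations $\vee$ and $\wedge$ untouched.
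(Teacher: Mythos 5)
Your proposal is correct and follows essentially the same route as the paper: the five non-lattice claims are obtained, exactly as in the paper's proof, by citing Theorem~\ref{T:qabqabi}, Proposition~\ref{P:pabisomom}, Theorem~\ref{T:qabequiv}, and Theorem~\ref{T:setequiv}, and the paper handles $U_{LL}^i$ and $U_L^{im}$ by the same ``restriction to the lattice-ordered setting'' that you carry out explicitly. Your spelled-out treatment of the two $\mathbf{LAb}$ cases is simply a more careful rendering of what the paper compresses into one phrase --- a reasonable precaution, since $\mathbf{LAb}$ is not a full subcategory of $\mathbf{PAb}$ or $\mathbf{QAb}$, but the restriction still works because the morphisms of $\mathbf{LAb}^i$ and $\mathbf{LAb}^m$ are by definition induced by l-group morphisms, exactly as you note.
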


    \begin{proof}
        The first assertion is a summary of the statements of Proposition~\ref{P:pabisomom} and Theorem~\ref{T:qabqabi}, and the special case of the restriction of the isomorphisms to $\mathbf{LAb}$ and $\mathbf{LAb}^i$.

       We have that $U_Q^{im}$ is an equivalence by Theorem~\ref{T:qabequiv}, the functor $U_P^{im}$ is an equivalence by the first portion of Proposition~\ref{P:pabisomom}, and the fact that $U^{im}$ is an equivalence results from Theorem~\ref{T:setequiv}. Finally, the fact that $U_L^{im}$ is an equivalence follows by restriction from $\mathbf{PAb}$ to $\mathbf{LAb}$.
    \end{proof}

    \begin{proposition}
        Each of the forgetful functors $U_Q, U^i, U_Q^i,$ and $U_Q^m$ of Figure~\ref{F:forgetfuldiagram} is a right adjoint.
    \end{proposition}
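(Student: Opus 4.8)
The plan is to exhibit a left adjoint for each of the four functors; for $U_Q$ this has already been done, since $U_Q$ is the functor $U$ of Proposition~\ref{P:setqogrp}, whose stated left adjoint $F$ sends a set to the free abelian group on it with the trivial ordering. For $U^i\colon\mathbf{Set}^i\to\mathbf{Set}$, which sends an object $(i\colon X'\hookrightarrow X)$ to the ambient set $X$, I would take the functor $A^i$ with $A^iX=(\emptyset\hookrightarrow X)$ and $A^i(f\colon X\to Y)=(f,\mathrm{id}_{\emptyset})$. A morphism of $\mathbf{Set}^i$ from $(\emptyset\hookrightarrow X)$ to $(j\colon Y'\hookrightarrow Y)$ is a pair $(f_1,f_2)$ in which $f_2\colon\emptyset\to Y'$ is forced and the defining square of Definition~\ref{D:base} commutes automatically, so these morphisms are in natural bijection with set maps $X\to Y=U^i(j)$; hence $A^i\dashv U^i$.

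For $U_Q^i\colon\mathbf{QAb}^i\to\mathbf{Set}^i$ I would use Theorem~\ref{T:qabqabi} to identify objects of $\mathbf{QAb}^i$ with abelian qo-groups, under which $U_Q^i$ sends $G$ to the underlying set insertion $G^+_0\hookrightarrow G$. Given a set insertion $i\colon X'\hookrightarrow X$, let $FX$ denote the free abelian group on $X$, and equip $FX$ with the quasi-ordering whose positive cone is the submonoid $M\subseteq FX$ generated by $i(X')$; by Proposition~\ref{P:ConeCharacterization}(a) this is a genuine abelian qo-group, and I let $A_Q^i(i)$ be the corresponding object of $\mathbf{QAb}^i$. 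The unit at $i$ would be the $\mathbf{Set}^i$-morphism consisting of the basis insertion $X\hookrightarrow FX$ together with the inclusion $i(X')\hookrightarrow M$, the defining square commuting because both composites carry $x'$ to the corresponding basis element. For the universal property, given an abelian qo-group $H$ and a $\mathbf{Set}^i$-morphism $(g_1,g_2)\colon(X'\hookrightarrow X)\to(H^+_0\hookrightarrow H)$, the universal property of the free abelian group extends $g_1$ uniquely to a group homomorphism $\overline{g}_1\colon FX\to H$; since $\overline{g}_1$ carries each generator of $M$ into $H^+_0$ and $H^+_0$ is a submonoid, $\overline{g}_1(M)\subseteq H^+_0$, so $\overline{g}_1$ is order-preserving, hence a morphism of $\mathbf{QAb}$, and it is the unique morphism whose composite with the unit is $(g_1,g_2)$. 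The same observation makes $A_Q^i$ functorial: a $\mathbf{Set}^i$-morphism $(f_1,f_2)$ forces $f_1(X')\subseteq Y'$, so the induced group homomorphism between the free abelian groups carries the generators of $M_X$ into $M_Y$ and is therefore order-preserving.

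For $U_Q^m\colon\mathbf{QAb}^m\to\mathbf{Set}^m$ there are two routes. One is to transport $A_Q^i$ along the equivalences $U_Q^{im}\colon\mathbf{QAb}^i\to\mathbf{QAb}^m$ and $U^{im}\colon\mathbf{Set}^i\to\mathbf{Set}^m$ of Corollary~\ref{C:doneadjunctions}: since these forgetful functors satisfy $U^{im}\circ U_Q^i=U_Q^m\circ U_Q^{im}$, the functor $U_Q^m$ is, up to natural isomorphism, a composite of right adjoints and hence itself a right adjoint. The other is to build the left adjoint directly, sending a set monomorphism $i\colon X'\to X$ to the monoid monomorphism $k\colon M\to U'(FX)$, where $M$ is the free commutative monoid on $X'$, $FX$ the free abelian group on $X$, and $k$ the monoid homomorphism extending $x'\mapsto i(x')$; injectivity of $k$ follows from that of $i$, and the universal property is proved from the universal properties of the free commutative monoid and the free abelian group, the compatibility of the two induced maps being checked on generators exactly as above. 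I expect no serious obstacle here: each verification is short, and the one recurring point needing care is that the ``free'' homomorphism out of $FX$ be order-preserving, which Proposition~\ref{P:ConeCharacterization} renders automatic because the positive cone is always a submonoid; the rest is bookkeeping across the chain of categories and the equivalences relating them.
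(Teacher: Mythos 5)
Your proposal is correct. For $U_Q$ and $U^i$ it coincides with the paper's own proof: the paper likewise invokes Proposition~\ref{P:setqogrp} (the trivially ordered free abelian group) and the functor sending $X$ to $\emptyset\hookrightarrow X$. Where you genuinely add something is in the treatment of $U_Q^i$ and $U_Q^m$: the paper disposes of these in one sentence, saying they follow ``from a restatement of the definition of the left adjoint to $U_Q$,'' and only later (Definition~\ref{D:leftadjoint}, Theorem~\ref{T:leftadjoint}) exhibits the actual left adjoint to $U_Q^m$, namely the free commutative monoid on $X'$ mapped into the free abelian group on $X$. Your explicit constructions are exactly this: the cone you take for $U_Q^i$, the submonoid of $FX$ generated by $i(X')$, is the image of the paper's $F_2X'\rightarrow U'F_1X$, and your second route for $U_Q^m$ is literally the paper's $F$. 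Your version buys a clarification the paper's terse wording obscures: the free object must carry the nontrivial order whose cone is generated by $X'$, since the trivially ordered free group on $X$ fails the universal property whenever $X'\neq\emptyset$ (maps out of it correspond to all set maps $X\rightarrow H$, not only those sending $X'$ into $H_0^+$). Your transport-along-equivalences argument via Corollary~\ref{C:doneadjunctions} is also sound (equivalences are right adjoints, and being a right adjoint is stable under natural isomorphism and composition), giving a second derivation of the $U_Q^m$ case that the paper does not spell out. One cosmetic nit: the second component of your unit for the $U_Q^i$ adjunction should be stated as the map $X'\rightarrow M$, $x'\mapsto i(x')$, rather than the inclusion of $i(X')$; since $i$ is injective this is clearly what you intend.
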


    \begin{proof}
        Let us begin with $U^i$. Consider the functor $A^i:\mathbf{Set}\rightarrow \mathbf{Set}^i$ with morphism part which sends $f:X\rightarrow Y$ to $A^if=(f, 0)$, where $0:\emptyset\hookrightarrow Y'$. We may see that $U^iA^i$ is the identity functor, so the unit of the adjunction is the identity natural transformation. Similarly $F^iA^i$ sends the morphism $(f_1,f_2)$ to the morphism $(f_1, 0)$, so the component of the counit of the adjunction at an object $X'\rightarrow X$ is the pair consisting of the identity map on $X$ and the empty map $\emptyset\hookrightarrow X'$.

        We now show that $U_Q$ is the right adjoint of an adjunction. The fact that $U_Q^i$ and $U_{Q}^m$ are right adjoints then follows from a restatement of the definition of the left adjoint to $U_Q$. Define $A_Q:\mathbf{Set}\rightarrow \mathbf{QAb}$ to have morphism part sending $f:X\rightarrow Y$ to $\overline{f}:F_1X\rightarrow F_1Y$, where $\overline{f}$ is the induced map from the free abelian group $F_1X$ to the free abelian group $F_1Y$, in which both $F_1X$ and $F_1Y$ are trivially ordered. Then, by definition, all group homomorphisms from $F_1X$ to an abelian qo-group $G$ are necessarily order-preserving, which means we may use the restrictions of the usual unit and counit components from the adjunction between $\mathbf{Set}$ and $\mathbf{Ab}$.
    \end{proof}

    \begin{proposition}
        Each of the forgetful functors $U_{PQ}, U_{PQ}^i,$ and $U_{PQ}^m$ of Figure~\ref{F:forgetfuldiagram} is a right adjoint.
    \end{proposition}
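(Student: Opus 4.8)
The plan is to take the \emph{antisymmetrization} functor of Proposition~\ref{P:symmetrization} as the left adjoint $A_{PQ}$ of $U_{PQ}$, so that $U_{PQ}$ becomes a reflective embedding, and then to obtain the statements for $U_{PQ}^i$ and $U_{PQ}^m$ by transport along the isomorphisms and equivalences already recorded in Corollary~\ref{C:doneadjunctions}. Observe first that $U_{PQ}$ realizes $\mathbf{PAb}$ as a full subcategory of $\mathbf{QAb}$: a partial ordering is in particular a quasi-ordering, and an order-preserving group homomorphism between po-groups is the same thing whether regarded in $\mathbf{PAb}$ or in $\mathbf{QAb}$. On objects, set $A_{PQ}(G,\preceq)=(G/M^*,\preceq')$, where $M=G^+_0$ and $\preceq'$ is the partial ordering of Proposition~\ref{P:symmetrization}. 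On morphisms, given an order-preserving $f\colon G\to H$ with positive cones $M$ and $N$, the inclusion $f(M)\subseteq N$ forces $f(M^*)\subseteq N^*$ (if $u,-u\in M$ then $f(u),-f(u)\in N$), so $f$ descends to a group homomorphism $\bar f\colon G/M^*\to H/N^*$, which is order-preserving because it carries $M/M^*$ into $N/N^*$. Functoriality of $A_{PQ}$ is then immediate.

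Next I would exhibit the adjunction $A_{PQ}\dashv U_{PQ}$. The unit at $(G,\preceq)$ is the quotient homomorphism $\eta_G\colon G\to G/M^*$, which is order-preserving since it carries $M$ onto $M/M^*$. For the universal property, let $P$ be an abelian po-group with positive cone $N$; by Proposition~\ref{P:ConeCharacterization}(b), $N$ is conical, so $N^*=\{0\}$. Given an order-preserving $g\colon G\to U_{PQ}P$, the inclusion $g(M)\subseteq N$ gives $g(M^*)\subseteq N^*=\{0\}$, whence $M^*\subseteq\ker g$ and $g$ factors uniquely as $g=\bar g\circ\eta_G$ through a group homomorphism $\bar g\colon G/M^*\to P$; since $g$ carries $M$ into $N$, so does $\bar g$ on $M/M^*$, so $\bar g$ is order-preserving. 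This yields the natural bijection $\mathbf{PAb}(A_{PQ}G,P)\cong\mathbf{QAb}(G,U_{PQ}P)$; naturality in both variables is a routine diagram chase, and the associated counit is the isomorphism $A_{PQ}U_{PQ}P\cong P$ coming from $N^*=\{0\}$.

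It remains to treat $U_{PQ}^i$ and $U_{PQ}^m$. For $U_{PQ}^i$, the square of forgetful functors with top edge $U_{PQ}$, bottom edge $U_{PQ}^i$, and vertical edges the isomorphisms $U_P^i\colon\mathbf{PAb}^i\to\mathbf{PAb}$ and $U_{QQ}^i\colon\mathbf{QAb}^i\to\mathbf{QAb}$ commutes by construction of these categories (cf. Theorem~\ref{T:qabqabi} and Corollary~\ref{C:doneadjunctions}); hence $U_{PQ}^i=(U_{QQ}^i)^{-1}\circ U_{PQ}\circ U_P^i$ is a right adjoint, with left adjoint $A_{PQ}^i=(U_P^i)^{-1}\circ A_{PQ}\circ U_{QQ}^i$. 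For $U_{PQ}^m$, the analogous square has vertical edges the equivalences $U_P^{im}$ and $U_Q^{im}$ of Corollary~\ref{C:doneadjunctions}, and again commutes (up to natural isomorphism); since a composite of a right adjoint with equivalences — each of which is itself a right adjoint, its quasi-inverse serving as left adjoint — is again a right adjoint, $U_{PQ}^m$ is a right adjoint. I expect the only genuine work to lie in the first two paragraphs, namely pinning down functoriality of antisymmetrization and checking order-preservation of the unit and of the factored maps $\bar g$; the passage to the $i$- and $m$-decorated versions is purely formal once the commuting squares of Figure~\ref{F:forgetfuldiagram} are read off.
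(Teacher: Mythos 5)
Your proposal is correct, and its core coincides with the paper's: both take the antisymmetrization $G \mapsto G/G_0^{+*}$ of Proposition~\ref{P:symmetrization} as the left adjoint $A_{PQ}$, with the quotient homomorphism as unit and the identification $A_{PQ}U_{PQ}P \cong P$ (coming from conicality, $N^*=\{0\}$) as counit, and both make the same well-definedness check that $f(G_0^{+*})\subseteq H_0^{+*}$ for order-preserving $f$. You verify the adjunction via the universal property of the quotient (the hom-set bijection, using $M^*\subseteq\ker g$), whereas the paper writes down unit and counit and appeals to the triangular identities; these are interchangeable, and your factorization argument is if anything more explicit about why order-preservation survives the quotient. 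The genuine divergence is in the decorated cases: the paper constructs $A_{PQ}^i$ and $A_{PQ}^m$ directly by the formulas $(i:G_0^+\hookrightarrow G)\mapsto (G_0^+/G_0^{+*}\hookrightarrow G/G_0^{+*})$ and $(i:M\rightarrow G)\mapsto (M/M^*\rightarrow G/i(M^*))$, while you transport the base adjunction along the isomorphisms $U_P^i$, $U_{QQ}^i$ and the equivalences $U_P^{im}$, $U_Q^{im}$ of Corollary~\ref{C:doneadjunctions}, using that the relevant squares of Figure~\ref{F:forgetfuldiagram} commute (up to natural isomorphism) and that isomorphisms and equivalences are two-sided adjoints. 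Your route is purely formal once the base adjunction and the commuting squares are in hand, at the price of leaving the left adjoints implicit; the paper's route yields explicit descriptions of the left adjoints in the $i$- and $m$-pictures but leaves the (equally routine) adjunction verifications for them to the reader. Either way the statement is fully established.
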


    \begin{proof}
        Define the functor $A_{PQ}:\mathbf{QAb}\rightarrow \mathbf{PAb}$ to have a morphism part that takes $f:G\rightarrow H$ to $$f/G_0^{+*}: G/G_0^{+*}\rightarrow H/H_0^{+*}; g+G_0^{+*}\mapsto f(g)+H_0^{+*}\, . $$ This is a well defined group homomorphism, since if $a\in G_0^{+*}$, then $-a\in G_0^{+*}$ by definition. Further, since $f$ is order-preserving, we must have that $f(a)\in H_0^+$ and $f(-a)\in H_0^+$, so $f(a)\in H_0^{+*}$.

        The functor $A_{PQ}U_{PQ}$ is the identity, so $\varepsilon_G: A_{PQ}F_2G=G\rightarrow G$ is the identity map. Also, $\eta_G:G\rightarrow U_{PQ}A_{PQ}G=U_{PQ}G/G_0^{+*}$ is the quotient map. It is clear that
        $$\xymatrix{ A_{PQ}\ar[r]^{A_{PQ}\eta\phantom{mmm}} & A_{PQ}U_{PQ}A_{PQ}\ar[r]^{\phantom{mmm}\varepsilon A_{PQ}} & A_{PQ}}$$
        and
        $$\xymatrix{ U_{PQ}\ar[r]^{\eta U_{PQ}\phantom{mmm}} & U_{PQ}A_{PQ}U_{PQ}\ar[r]^{\phantom{mmm}U_{PQ}\varepsilon} & U_{PQ}}$$
        are identity morphisms.

        If we define $A_{PQ}^i(i:G_0^+\hookrightarrow G)=A_{PQ}^ii: G_0^+/G_0^{+*}\hookrightarrow G/G_0^{+*}$ and extend it to a morphism part in a similar fashion to $A_{PQ}$, we obtain $A_{PQ}^i$ as the left adjoint to $U_{PQ}^i$.

        Finally, the specification of
        $$A_{PQ}^m:(i:M\rightarrow G) \mapsto (A_{PQ}^mi: M/M^*\rightarrow G/i(M^*))$$
        will provide a left adjoint to $U_{PQ}^m$.
    \end{proof}

    The adjoint situations for $U_L:\mathbf{LAb}\rightarrow \mathbf{Set}$, $U_L^i:\mathbf{LAb}^i\rightarrow\mathbf{Set}^i$, and $U_L^i:\mathbf{LAb}^m\rightarrow \mathbf{Set}^m$ are well known and, in fact, the first adjunction is even monadic. What is notable about the adjunction between $\mathbf{LAb}$ and $\mathbf{Set}$ are the free objects. Consider the following result due to Birkhoff \cite{MR598630}.

    \begin{example}\label{E:FreeOnOne}
        Let $X=\{x\}$ be a set. Then the free lattice-ordered group $A_L(X)$ on $X$ is isomorphic to $\mathbb{Z}\oplus \mathbb{Z}$ with the componentwise ordering. The generator for this lattice-ordered group is obtained by mapping $x$ to $(1,-1)$.
    \end{example}

    It was also shown in \cite{MR2318248} that, given any finitely generated (not necessarily free) abelian l-group, the underlying group is a free abelian group. At this point, it is important to note that there is no left adjoint to the forgetful functor from $\mathbf{PAb}$ and $\mathbf{LAb}$. This issue was discussed by Weinberg \cite{MR0153759}, and later refined by Conrad \cite{MR0270992}. The main result from \cite{MR0270992} is as follows. We begin with a definition.

    \begin{definition}
        Given a group $G$ we say that a partial order $\leq$ on $G$ is a \emph{right order} if $\leq$ is a linear order and $ab\leq ac$ whenever $b\leq c$.
    \end{definition}

    For an abelian po-group, the notions of a right order and a linear order are equivalent.

    \begin{theorem}\label{T:noleft}(Conrad, 1970)
        For a po-group $G$ the following are equivalent:

        \begin{enumerate}
            \item There exists a free l-group over $G$;
            \item There exists a $\mathbf{PAb}$ isomorphism between $G$ and the underlying po-group of an l-group;
            \item The positive cone $G_0^+$ of $G$ is an intersection of right orders.
        \end{enumerate}
    \end{theorem}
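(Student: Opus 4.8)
The plan is to prove the cycle $(1)\Rightarrow(2)\Rightarrow(3)\Rightarrow(1)$, using two classical inputs: every abelian l-group is a subdirect product of totally ordered abelian groups (representability; cf.\ \cite{MR598630}), and an order-preserving group homomorphism whose domain is a \emph{totally} ordered abelian group automatically preserves finite joins and meets, hence is an l-group morphism. I read (1) in the usual strong sense — a free l-group over $G$ is an l-group $F$ together with a $\mathbf{PAb}$-morphism $\eta:G\to F$ that is injective, reflects the order, and is universal among $\mathbf{PAb}$-morphisms from $G$ into l-groups (the embedding condition on $\eta$ is automatic once $G$ is torsion-free, which each of (1)--(3) forces). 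Granting this, $(1)\Rightarrow(2)$ is immediate: $\eta$ exhibits $G$, up to $\mathbf{PAb}$-isomorphism, as a sub-po-group of the l-group $F$.

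For $(2)\Rightarrow(3)$ I would first check that embedding $G$ as a sub-po-group of an l-group $L$ forces $G_0^+$ to be \emph{pure}: if $ng\in G_0^+$ with $n\ge1$ then $ng\ge0$ in $L$, and in an abelian l-group $nx\ge0$ forces $x\ge0$ (write $x=x^+-x^-$, note $nx^+\wedge nx^-=0$, and use that l-groups are torsion-free), so $g\in G_0^+$. Lorenzen's extension theorem then identifies a pure partial-order cone with the intersection of all total orders of $G$ that contain it, which is exactly (3).

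For $(3)\Rightarrow(1)$ I would first record that (3) makes $G_0^+$ pure (an intersection of pure cones is pure), and hence — applying Lorenzen's theorem to the blocks $D\cap-D$ of a total preorder $D$ — that every total preorder cone $D\supseteq G_0^+$ contains the cone of a genuine total order that still contains $G_0^+$; call this $(*)$. Now I write $G_0^+=\bigcap_\alpha C_\alpha$ with each $C_\alpha$ a total-order cone; each $(G,C_\alpha)$ is an abelian l-group, so the diagonal $\delta:G\to\Phi:=\prod_\alpha(G,C_\alpha)$ is a $\mathbf{PAb}$-embedding into an l-group, and I take $F\le\Phi$ to be the l-subgroup generated by $\delta(G)$, every element of which is a finite join of finite meets of elements of $\delta(G)$. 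To show $(F,\delta)$ is the free l-group over $G$: given an l-group $H$ and $f\in\mathbf{PAb}(G,H)$, I write $H\hookrightarrow\prod_\mu S_\mu$ with each $S_\mu$ totally ordered and define $\bar f:F\to H$ coordinatewise by $\bar f\bigl(\bigvee_i\bigwedge_j\delta(g_{ij})\bigr)=\bigvee_i\bigwedge_j f(g_{ij})$. Uniqueness is clear since $\delta(G)$ l-generates $F$; the content is well-definedness. Fixing a coordinate $f_\mu:G\to S_\mu$, its pulled-back cone $f_\mu^{-1}((S_\mu)_0^+)$ is a total preorder containing $G_0^+$, so by $(*)$ there is a total-order cone $C_\alpha$ with $G_0^+\subseteq C_\alpha\subseteq f_\mu^{-1}((S_\mu)_0^+)$; then $f_\mu:(G,C_\alpha)\to S_\mu$ is order-preserving out of a totally ordered group, hence an l-morphism, and therefore carries any identity $\bigvee_i\bigwedge_j g_{ij}=\bigvee_k\bigwedge_l h_{kl}$ holding in $(G,C_\alpha)$ over to $S_\mu$. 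Since such an identity holds in all factors of $\Phi$ exactly when the corresponding elements of $F$ agree, $\bar f$ is well defined; it is plainly an l-morphism, carries $\delta(G)$ and hence $F$ into $H$, and satisfies $\bar f\delta=f$. (When $G$ is $\mathbb{Z}$ with the trivial ordering this reconstructs the free l-group $\mathbb{Z}\oplus\mathbb{Z}$ of Example~\ref{E:FreeOnOne}.)

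The well-definedness of $\bar f$ in $(3)\Rightarrow(1)$ is where I expect the real difficulty: one must be certain that no l-group identity among elements of $\delta(G)$ is destroyed by $f$, and the only leverage is the passage to totally ordered subdirect factors, which depends on the refinement $(*)$ of total preorders to total orders and so loops back to the purity analysis of $(2)\Rightarrow(3)$. A smaller point worth settling at the outset is the meaning of (1) for po-groups with torsion, where a free l-group can exist with non-injective unit, making the literal reading of (1) too weak; one therefore restricts to torsion-free $G$, or, as in \cite{MR0270992}, builds injectivity of $\eta$ into the definition.
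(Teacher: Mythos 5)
The paper does not prove this theorem at all: it is quoted from Conrad \cite{MR0270992} (refining Weinberg), so there is no internal proof to compare yours against, and your attempt has to be judged on its own. On those terms your cycle $(1)\Rightarrow(2)\Rightarrow(3)\Rightarrow(1)$ is essentially a sound reconstruction of the \emph{abelian} case: purity of the cone under an order-embedding into an l-group, Lorenzen's identification of a pure cone with the intersection of the total orders containing it, and the free object realized as the l-subgroup generated by the diagonal in $\Phi=\prod_\alpha(G,C_\alpha)$, with well-definedness of $\overline{f}$ checked coordinatewise through $(*)$. Your instinct that well-definedness is the real content, and that the definitional reading of (1) (injective, order-reflecting unit) matters, matches how Conrad sets things up; your silent reinterpretation of (2) as ``$G$ is $\mathbf{PAb}$-isomorphic to a \emph{sub}-po-group of an l-group'' is also the correct one, since the literal (2) fails for trivially ordered $\mathbb{Z}$, which satisfies (1) and (3).

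Three points need tightening. First, in the well-definedness step the total order produced by $(*)$ must actually be one of the coordinates of $\Phi$, so you should index $\Phi$ by \emph{all} total-order cones containing $G_0^+$ (their intersection is still $G_0^+$, being squeezed between $G_0^+$ and your given family), not by an arbitrary family realizing the intersection. Second, $(*)$ rests on the order-extension theorem (every partial order of a torsion-free abelian group extends to a total order) applied to the subgroup $K=D\cap-D$, plus the routine check that $(D\setminus K)\cup C_K$ is a total-order cone; calling this ``Lorenzen's theorem'' is harmless but should be made precise. Third, and most substantively, the statement as given concerns arbitrary po-groups and right orders -- that is Conrad's actual result -- while your tools (representability of abelian l-groups, isotone maps out of totally ordered groups being l-morphisms, the commutative Zorn argument) are abelian-specific; your argument therefore proves only the abelian case, which is all the paper ever uses (its corollary applies the theorem to $A_{PQ}T$ for abelian $T$), but it is not the cited theorem in full generality.
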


    We now have the following.

    \begin{corollary}
        The forgetful functor $U:\mathbf{LAb}\rightarrow \mathbf{QAb}$ has no left adjoint.
    \end{corollary}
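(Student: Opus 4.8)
The plan is to argue by contradiction, using the factorization of $U$ through the inclusion $\mathbf{PAb}\hookrightarrow\mathbf{QAb}$ together with the fact, recorded in the discussion above, that the forgetful functor from $\mathbf{LAb}$ to $\mathbf{PAb}$ admits no left adjoint. Write $V\colon\mathbf{LAb}\to\mathbf{PAb}$ for the functor that sends an abelian l-group to its underlying po-group and an l-group morphism to the same homomorphism, now viewed merely as order-preserving; since a lattice order is in particular a partial order, $U=U_{PQ}\circ V$, where $U_{PQ}\colon\mathbf{PAb}\to\mathbf{QAb}$ is the inclusion functor of the previous section. The first thing I would note is that $\mathbf{PAb}$ is a \emph{full} subcategory of $\mathbf{QAb}$: a $\mathbf{QAb}$-morphism between two abelian po-groups is exactly an order-preserving group homomorphism, which is exactly a $\mathbf{PAb}$-morphism. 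Hence $U_{PQ}$ is full and faithful.

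Now suppose, toward a contradiction, that $U$ has a left adjoint $A\colon\mathbf{QAb}\to\mathbf{LAb}$. I would then show that the composite $A\circ U_{PQ}\colon\mathbf{PAb}\to\mathbf{LAb}$ is left adjoint to $V$. Indeed, for an abelian po-group $G$ and an abelian l-group $L$ there are bijections
$$\mathbf{LAb}(AU_{PQ}G, L)\ \cong\ \mathbf{QAb}(U_{PQ}G, UL)\ =\ \mathbf{QAb}(U_{PQ}G, U_{PQ}VL)\ \cong\ \mathbf{PAb}(G, VL),$$
natural in $G$ and $L$: the first is the assumed adjunction $A\dashv U$, the equality is the identity $U=U_{PQ}\circ V$, and the last bijection is precisely the statement that $U_{PQ}$ is full and faithful. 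This yields $A\circ U_{PQ}\dashv V$.

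This contradicts the fact that $V\colon\mathbf{LAb}\to\mathbf{PAb}$ has no left adjoint. For a left adjoint to $V$ would supply, as the components of its unit, a free l-group over every abelian po-group $G$; by Theorem~\ref{T:noleft} this would force the positive cone of every abelian po-group to be an intersection of right orders, which is false (the phenomenon discussed by Weinberg and refined by Conrad, recorded preceding Theorem~\ref{T:noleft}). Hence no left adjoint $A$ to $U$ can exist.

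The only step that requires genuine care is the ``cancellation'' — passing from a left adjoint of $U_{PQ}\circ V$ to a left adjoint of $V$; this is exactly what the chain of hom-set bijections above accomplishes, and it works precisely because $U_{PQ}$ is full and faithful. The remaining ingredients (the factorization $U=U_{PQ}\circ V$, the fullness of $\mathbf{PAb}$ in $\mathbf{QAb}$, and the appeal to Conrad's theorem) are all immediate.
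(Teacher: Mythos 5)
Your proposal is correct, but it follows a genuinely different route from the paper's. The paper does not restrict a hypothetical left adjoint along the inclusion; it transports the problem along the antisymmetrization adjunction $A_{PQ}\dashv U_{PQ}$ constructed in the previous section: for a qo-group $T$ one has $\mathbf{QAb}(T,UL)\cong\mathbf{PAb}(A_{PQ}T,VL)$ naturally in $L$, so a free abelian l-group over $T$ exists exactly when one exists over the po-group $A_{PQ}T$. It then fixes a single concrete witness --- a finitely generated abelian qo-group $T$ with nontrivial torsion and antichain ordering --- and invokes Theorem~\ref{T:noleft} (finitely generated l-groups have no nontrivial torsion, so condition (2) fails for $A_{PQ}T$) to conclude that the required free object, hence the left adjoint, cannot exist. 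You instead use the factorization $U=U_{PQ}\circ V$ together with the fullness and faithfulness of $U_{PQ}$ to show that a left adjoint $A$ of $U$ would yield $A\circ U_{PQ}$ as a left adjoint of $V$; your chain of hom-set bijections is correct and natural, so that step is sound, and it gives a cleaner structural statement (any left adjoint of $U$ would already solve the $\mathbf{PAb}$-to-$\mathbf{LAb}$ problem). What the paper's route buys is self-containedness at the final step: it exhibits the object on which representability fails, whereas your closing claim ``which is false'' leans on the Weinberg--Conrad discussion recorded before Theorem~\ref{T:noleft} rather than on an explicit example. To make your argument fully self-contained you should name one po-group violating conditions (2)--(3) of Theorem~\ref{T:noleft}; any nontrivial finite abelian group with the trivial (antichain) order, such as $\mathbb{Z}_2$, works, since its torsion prevents any $\mathbf{PAb}$-isomorphism with the underlying po-group of an l-group and it admits no right orders at all --- which is precisely the witness the paper uses.
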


    \begin{proof}
        Let $T$ be a finitely generated abelian qo-group with non-trivial torsion elements and antichain ordering. Since $A_{PQ}$ takes the antisymmetrization of the qo-group, we have that there is no free abelian l-group over $A_{PQ}T$, since condition (2) of Theorem~\ref{T:noleft} is violated (finitely generated l-groups cannot have nontrivial torsion elements). With this observation, and the fact that adjoints are unique, we have that there is no left adjoint to $U$, as desired.
    \end{proof}

\section{Free algebras
}\label{S:Adjunction}

In the preceding section we discussed the relationships between several categories related to $\mathbf{QAb}$ and summarized the relevant adjunctions between them in Figure~\ref{F:forgetfuldiagram}. Of particular interest at this point is the left adjoint to the functor $U_{Q}^m:\mathbf{QAb}^m\rightarrow \mathbf{Set}^m$ (hereafter abbreviated to $U$), since the images of $\mathbf{Set}^m$-objects under the left adjoint will be regarded as free abelian qo-groups. (According to Proposition~\ref{P:setqogrp}, the images of objects under the left adjoint from $\mathbf{Set}$ to $\mathbf{QAb}$ have trivial order structure, and thus fail to encompass the full gamut expected of free abelian qo-groups.) We restate the situation as follows.

\begin{definition}\label{D:leftadjoint}
    Let $f:(i:X'\rightarrow X)\rightarrow (j:Y'\rightarrow Y)$ be a morphism in $\mathbf{Set}^m$, where $f$ is induced by the set map $f_1:X\rightarrow Y$. Define $Ff:(Fi:F_2X'\rightarrow U'F_1X)\rightarrow (Fj:F_2Y'\rightarrow U'F_1Y)$ to be the $\mathbf{QAb}^m$-morphism determined by the group homomorphism $F_1f:F_1X\rightarrow F_1Y$, where $F_1X$ is the free abelian group on $X$ and $F_2X'$ is the free commutative monoid on $X'$.
\end{definition}

\begin{definition}\label{D:abadjunction}
    Let $U_1:\mathbf{Ab}\rightarrow \mathbf{Set}$ be the forgetful functor, part of the adjunction $(F_1, U_1, \eta_1, \varepsilon_1)$.
\end{definition}

\begin{theorem}\label{T:leftadjoint}
    Let $U$ be the forgetful functor from $\mathbf{QAb}^m$ to $\mathbf{Set}^m$. Consider $F$, $F_1$, $F_2$, and $U'$ as in Definitions~\ref{D:leftadjoint} and \ref{D:abadjunction}. Then $F$ is left adjoint to $U$.
\end{theorem}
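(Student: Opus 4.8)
The plan is to establish the adjunction $F\dashv U$ by producing a bijection $\mathbf{QAb}^m\bigl(F(i),(j)\bigr)\cong\mathbf{Set}^m\bigl((i),U(j)\bigr)$, natural in both arguments, bootstrapped from the free/forgetful adjunction $(F_1,U_1,\eta_1,\varepsilon_1)$ between $\mathbf{Ab}$ and $\mathbf{Set}$ together with the analogous free/forgetful adjunction between the category of commutative monoids and $\mathbf{Set}$, whose unit and counit I denote $\eta_2$ and $\varepsilon_2$. Here $U$ sends an object $i:M\to U'G$ to the underlying injective set map $M\to U'G$ and a morphism to its pair of underlying set maps. The conceptual heart of the argument is that, in each of the two categories, a morphism between the two relevant objects is completely determined by a single set map $X\to U'H$ subject to one ``positive-cone'' side condition, and that these side conditions literally coincide.

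First I would record two easy reformulations of the morphism sets. For $\mathbf{QAb}^m$: since the codomain leg $j:N\to U'H$ of an object is a monomorphism, a $\mathbf{QAb}^m$-morphism from $(i:M\to U'G)$ to $(j:N\to U'H)$ is the same datum as a group homomorphism $\phi:G\to H$ with $U'\phi\bigl(i(M)\bigr)\subseteq j(N)$; the monoid component $M\to N$ of the commuting square then exists and is unique, being $j^{-1}\circ U'\phi\circ i$ on the image of $i$. Specializing the domain to $F(i_0:X'\to X)=(Fi_0:F_2X'\to U'F_1X)$ and noting that the image of $Fi_0$ is the submonoid of $F_1X$ generated by $i_0(X')$, such morphisms correspond exactly to group homomorphisms $\phi:F_1X\to H$ with $\phi\bigl(i_0(x')\bigr)\in j(N)$ for every $x'\in X'$. (To see that $F$ lands in $\mathbf{QAb}^m$ at all, one observes here that $Fi_0$ is a monomorphism because $i_0(X')$ is part of a free generating set of $F_1X$, so the submonoid it generates is free commutative on $i_0(X')$.) For $\mathbf{Set}^m$: a morphism $(i_0:X'\to X)\to U(j:N\to U'H)$ is a pair $(g_1:X\to U'H,\ g_2:X'\to N)$ with $j\circ g_2=g_1\circ i_0$; since $j$ is injective, $g_2$ is determined by $g_1$, existing precisely when $g_1\bigl(i_0(x')\bigr)\in j(N)$ for all $x'$. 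Thus such morphisms correspond to set maps $g_1:X\to U'H$ with $g_1(i_0(x'))\in j(N)$ for all $x'$.

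Next I would invoke $F_1\dashv U_1$: group homomorphisms $\phi:F_1X\to H$ correspond bijectively to set maps $g_1:=U_1\phi\circ\eta_{1,X}:X\to U_1H$, the underlying set of $U_1H$ being that of $U'H$, and $\phi$ restricted along $X\hookrightarrow U'F_1X$ agrees with $g_1$ under this identification. Hence $\phi(i_0(x'))=g_1(i_0(x'))$ in $U'H$, the two side conditions coincide, and the restricted bijection is exactly the wanted bijection $\mathbf{QAb}^m(F(i_0),(j))\cong\mathbf{Set}^m((i_0),U(j))$, the monoid components of the matched morphisms being the ones forced by the commuting-square and injectivity conditions above (equivalently, obtained from $\eta_2,\varepsilon_2$). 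Naturality in $(i_0:X'\to X)$ and in $(j:N\to U'H)$ then follows from naturality of $\mathbf{Ab}(F_1(-),(-))\cong\mathbf{Set}((-),U_1(-))$ on the first components, the second components contributing nothing since they are uniquely determined. I would close by noting that one may equivalently exhibit the unit $\eta_{(i_0:X'\to X)}=(\eta_{1,X},\eta_{2,X'})$ and counit $\varepsilon_{(j:N\to U'H)}$, the latter being the $\mathbf{QAb}^m$-morphism induced by $\varepsilon_{1,H}:F_1U'H\to H$ with monoid component $\varepsilon_{2,N}:F_2N\to N$, whereupon the required well-definedness and the two triangle identities reduce componentwise to those of $F_1\dashv U_1$ and of the free commutative monoid adjunction.

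The step I expect to be the main obstacle is the first reformulation: verifying carefully that the commuting-square datum of a $\mathbf{QAb}^m$-morphism carries no information beyond ``the group homomorphism sends the generators $i_0(X')$ into the positive cone $j(N)$'' — in particular that the monoid component always \emph{exists} (not merely is unique) under this condition, and that the matching phenomenon on the $\mathbf{Set}^m$ side is compatible with naturality. Once this bookkeeping is pinned down, the remainder is an essentially formal transport of the free/forgetful adjunction for abelian groups.
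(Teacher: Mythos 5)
Your proposal is correct and follows essentially the same route as the paper: both arguments bootstrap the adjunction from the free abelian group (and free commutative monoid) adjunctions and use injectivity of the structure monomorphisms to force the monoid components, so that everything reduces to the first (group/set) components — you package this as a natural hom-set bijection, while the paper writes down the unit and counit induced by $\eta_1$ and $\varepsilon_1$ and checks the triangle identities, a formulation you also record at the end. Your write-up is in fact somewhat more complete than the paper's, e.g.\ in verifying that $Fi_0$ is a monomorphism so that $F$ genuinely lands in $\mathbf{QAb}^m$, a point the paper leaves implicit.
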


\begin{proof}
    We prove the theorem by constructing the unit and counit of the claimed adjunction, and verifying the triangular identities. Define $$\eta_i: (i:X'\rightarrow X)\rightarrow (UFi:U_2F_2X'\rightarrow U_2U'F_2X)\, ,$$ the map induced by the set homomorphism $\eta_{1X}$. Define $$\varepsilon_j: (FUj:F_2U_2M\rightarrow F_1U_2U'G)\rightarrow (j:M \rightarrow U'G)\, ,$$ the map induced by the abelian group homomorphism $\varepsilon_{1G}$.

    We have $\xymatrix{ F_1\ar[r]^{F_1\eta_1\phantom{mm}} &F_1U_1F_1\ar[r]^{\phantom{nm}\varepsilon_1F_1} & F_1}$ as the identity at each set and $\xymatrix{ U_1\ar[r]^{\eta_1 U_1\phantom{mm}} &U_1F_1U_1\ar[r]^{\phantom{m}U_1\varepsilon_1} & U_1}$ as the identity at each abelian group. Furthermore, if $f:(i:X'\rightarrow X)\rightarrow (i:X'\rightarrow X)$ is induced by the identity on $X$, the commuting diagram and the monic nature of $i$ forces $f$ to be the identity morphism. The same holds for a morphism in $\mathbf{QAb}^m$.
\end{proof}

\begin{definition}
The image of an object of $\mathbf{Set}^m$ under $F$ is described as a \emph{free object} of $\mathbf{QAb}^m$, or as a \emph{free abelian qo-group}.
\end{definition}

Recall the characterization of free abelian groups and free commutative monoids.

    \begin{proposition}\label{P:freegroups}
        An abelian group $G$ is a free abelian group if and only if there is an indexing set $X$ so that $G\cong \bigoplus\limits_{x\in X} \mathbb{Z}$. A commutative monoid $M$ is a free commutative monoid if and only if there is an indexing set $X'$ so that $M\cong\bigoplus\limits_{x\in X'} \mathbb{N}$.
    \end{proposition}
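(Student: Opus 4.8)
The plan is to verify the universal mapping property directly and then invoke uniqueness of free objects. Recall that in any variety the free algebra on a set $X$ is characterized up to isomorphism by the existence of a map $\iota\colon X\to UG$ through which every set map from $X$ into the underlying set of an algebra factors uniquely via a homomorphism. Thus it suffices to show that $\bigoplus_{x\in X}\mathbb{Z}$, together with the canonical map sending $x$ to the standard basis element $e_x$ (the tuple with $1$ in coordinate $x$ and $0$ elsewhere), has this property in $\mathbf{Ab}$, and likewise that $\bigoplus_{x'\in X'}\mathbb{N}$ has the analogous property in the variety of commutative monoids.

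First I would treat the group case. Given an abelian group $H$ and a set map $\phi\colon X\to U_1H$, every element of $\bigoplus_{x\in X}\mathbb{Z}$ is a \emph{finite} $\mathbb{Z}$-linear combination $\sum_{x}n_xe_x$, so the assignment $\Phi\bigl(\sum_x n_x e_x\bigr):=\sum_x n_x\phi(x)$ is a well-defined function; well-definedness and finiteness of the sum both rely on using the direct sum rather than the direct product. One checks $\Phi$ is a group homomorphism by distributing over coordinatewise addition, and it is the unique homomorphism extending $\phi$ because the $e_x$ generate $\bigoplus_{x\in X}\mathbb{Z}$. Hence $\bigoplus_{x\in X}\mathbb{Z}$ is free on $X$; since free objects on a given set are unique up to isomorphism, every free abelian group is of this form, and conversely every group of this form is free.

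The monoid case goes through mutatis mutandis: an element of $\bigoplus_{x'\in X'}\mathbb{N}$ is a finite $\mathbb{N}$-combination $\sum_{x'}n_{x'}e_{x'}$, and for a commutative monoid $M$ with set map $\psi\colon X'\to M$ the extension $\Psi\bigl(\sum_{x'}n_{x'}e_{x'}\bigr):=\sum_{x'}n_{x'}\psi(x')$ is well-defined — here \emph{commutativity} of $M$ is exactly what guarantees the right-hand side is independent of any ordering of the terms — is a monoid homomorphism, and is unique since the $e_{x'}$ generate. So $\bigoplus_{x'\in X'}\mathbb{N}$ is the free commutative monoid on $X'$, and the same uniqueness argument closes the claim.

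The argument is essentially bookkeeping; the only genuine points to watch are (i) that the coproduct in $\mathbf{Ab}$, and in the variety of commutative monoids, is the direct sum (finite support) and not the direct product, which is precisely what makes the extension well-defined, and (ii) in the monoid case, the explicit use of commutativity to make the extended map independent of the order of summation. I expect (i) to be the step where a careless proof would go astray.
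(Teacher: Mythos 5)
Your argument is correct: verifying the universal mapping property for $\bigoplus_{x\in X}\mathbb{Z}$ (respectively $\bigoplus_{x'\in X'}\mathbb{N}$) and then invoking uniqueness of free objects on a given set is the standard proof of this characterization. The paper itself offers no proof --- it merely ``recalls'' this fact as background for describing free abelian qo-groups --- so there is nothing to contrast with; your write-up, including the attention to finite support in the direct sum and to commutativity when summing over an unordered index set, would serve as a complete justification.
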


    \begin{definition}
        In the event that the indexing set for a free abelian group (or commutative monoid) is finite, the cardinality of $X$ is commonly known as the \emph{rank} of the free abelian group (or commutative monoid). The rank determines a free abelian group (or commutative monoid) up to isomorphism. The function $\textup{rk}\,(G)$ assigns, to each free abelian group (or commutative monoid) $G$, its rank.
    \end{definition}

In light of Proposition~\ref{P:freegroups}, we may now characterize free abelian qo-groups in the context of the base category $\mathbf{Set}^m$ as follows.

    \begin{theorem}
        An abelian qo-group associated with $i:M\rightarrow G$ is a free abelian qo-group if and only if $M$ and $G$ are free.
    \end{theorem}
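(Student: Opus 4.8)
The plan is to prove the two implications separately, using the explicit description of the free functor $F$ in Definition~\ref{D:leftadjoint} together with the characterization of free abelian groups and free commutative monoids in Proposition~\ref{P:freegroups}.

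For the forward direction, suppose the abelian qo-group associated with $i\colon M\to G$ is free, so that in $\mathbf{QAb}^m$ the object $i$ is isomorphic to $Fj$ for some object $j\colon X'\to X$ of $\mathbf{Set}^m$. By Definition~\ref{D:leftadjoint}, $Fj$ is the monoid monomorphism $F_2X'\to U'F_1X$, where $F_2X'$ is the free commutative monoid on $X'$ and $F_1X$ is the free abelian group on $X$. The first point to record is that a $\mathbf{QAb}^m$-isomorphism is a pair $(f_1,f_2)$ in which $f_1$ is a group isomorphism and $f_2$ is a monoid isomorphism: since both the morphism and its inverse are $\mathbf{QAb}^m$-morphisms, each component has a two-sided inverse in the appropriate category. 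Reading off the two components of the isomorphism $i\cong Fj$ then yields a monoid isomorphism $M\cong F_2X'$ and a group isomorphism $G\cong F_1X$, so Proposition~\ref{P:freegroups} gives that $M$ and $G$ are free.

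For the converse, suppose $M$ is a free commutative monoid and $G$ is a free abelian group. By Proposition~\ref{P:freegroups} choose index sets $X'$ and $X$ with $M\cong\bigoplus_{X'}\mathbb{N}$ and $G\cong\bigoplus_X\mathbb{Z}$, and transport $i$ along these isomorphisms so that we may assume $M=F_2X'$ and $G=F_1X$. It then suffices to produce a monomorphism $j\colon X'\to X$ of $\mathbf{Set}^m$ together with a $\mathbf{QAb}^m$-isomorphism $Fj\cong i$. The idea is to arrange the chosen bases so that $i$ carries the basis $X'$ injectively onto a subset of a basis of $G$; taking $j$ to be the induced inclusion of index sets, the maps $Fj$ and $i$ then agree on the generators of $F_2X'$ and therefore coincide.

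The step requiring the most care is this basis-adjustment, and it is the main obstacle. A monoid monomorphism from a free commutative monoid into a free abelian group need not send a monoid basis onto part of a group basis --- for instance the doubling map of Example~\ref{E:qogroups}(2) has free domain and free codomain --- so one must show that, up to $\mathbf{QAb}^m$-isomorphism, $i$ may always be replaced by such a basis-respecting map (or else carry the extra hypothesis that $i$ already respects a choice of bases, the argument above then going through verbatim). I would attack this by passing to the subgroup $H\le G$ generated by $i(M)$, using that a subgroup of a free abelian group is again free to fix a basis of $H$, comparing that basis with the monoid generators of the submonoid $i(M)$, and then using the freedom to adjust the bases of $M$, of $H$, and of a free complement of $H$ in $G$ so as to line the pictures up.
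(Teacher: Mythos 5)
Your forward direction is correct and complete: an isomorphism in $\mathbf{QAb}^m$ is componentwise (its inverse is again a pair of compatible maps), so $M\cong F_2X'$ and $G\cong F_1X$, and Proposition~\ref{P:freegroups} applies. For the converse, however, the step you flag as ``the main obstacle'' is not a technical difficulty to be engineered away but an actual counterexample to the statement read literally, and you have already named it: the doubling map $2\colon\mathbb{N}\to U'\mathbb{Z}$ of Example~\ref{E:qogroups}(2). There $M=\mathbb{N}$ and $G=\mathbb{Z}$ are free of rank $1$, yet the object is not free in $\mathbf{QAb}^m$. Indeed, by Definition~\ref{D:leftadjoint} the free objects whose group part has rank $1$ are, up to isomorphism, only the empty insertion $\{0\}\to\mathbb{Z}$ and the standard insertion $\mathbb{N}\to\mathbb{Z}$; any $\mathbf{QAb}^m$-isomorphism from one of these onto the doubling object would be induced by a group automorphism of $\mathbb{Z}$, i.e.\ $\pm1$, carrying $\mathbb{N}$ (or $\{0\}$) onto $2\mathbb{N}$, which is impossible. (This is consistent with the later proposition counting exactly $n+1$ isomorphism classes of free objects of group rank $n$; the doubling object would be an illegal extra class for $n=1$.) Note also that the paper states this theorem without proof, so there is no argument there to rescue the literal ``if'' direction either.

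Consequently your proposed repair cannot succeed in general, and it fails exactly where one expects: the subgroup $H$ generated by $i(M)$ need not be a direct summand of $G$ --- in the example $H=2\mathbb{Z}\le\mathbb{Z}$ admits no free complement --- so no adjustment of bases of $M$, $H$, and a complement can convert $i$ into a basis-respecting insertion up to $\mathbf{QAb}^m$-isomorphism. What is true, and is presumably the intended content, is the version in your parenthetical: if one additionally assumes that $i$ carries a monoid basis of $M$ injectively onto part of a group basis of $G$ (equivalently, that up to relabelling of generators $i$ is of the form $Fj$ for an injection $j\colon X'\to X$ in $\mathbf{Set}^m$), then your construction of $j$ and the identification $Fj\cong i$ on generators completes the proof, and combined with your forward direction this gives a correct argument for that strengthened statement. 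As a proof of the theorem as stated, though, the converse has a genuine gap that cannot be closed.
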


    \begin{proposition}
        If $i: M \rightarrow G$ is free, then $\textup{rk}\,(M)\leq \textup{rk}\,(G)$.
    \end{proposition}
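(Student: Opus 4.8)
The plan is to unwind the definition of ``free object'' back to the set monomorphism that produces it, and then compare cardinalities of indexing sets. Since $i:M\to G$ is free, by definition there is an object $j:X'\to X$ of $\mathbf{Set}^m$ with $i:M\to G$ isomorphic in $\mathbf{QAb}^m$ to $Fj:F_2X'\to U'F_1X$; here, by Definition~\ref{D:leftadjoint}, $F_2X'$ is the free commutative monoid on $X'$ and $F_1X$ is the free abelian group on $X$. An isomorphism of objects in $\mathbf{QAb}^m$ consists of a pair whose first component is a group isomorphism and whose second component is a monoid isomorphism between the two domain monoids; in particular it restricts to a monoid isomorphism $M\cong F_2X'$ and, on the codomains, to an isomorphism of monoid reducts that is automatically a group isomorphism $G\cong F_1X$ (a monoid homomorphism between the monoid reducts of abelian groups preserves inverses, since $-a$ is the unique element summing with $a$ to $0$).

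Next I would apply Proposition~\ref{P:freegroups} together with the definition of rank: $\textup{rk}\,(M)=\textup{rk}\,(F_2X')=|X'|$ and $\textup{rk}\,(G)=\textup{rk}\,(F_1X)=|X|$, where $|\cdot|$ denotes the cardinality of the indexing set. These values are unambiguous because rank classifies free abelian groups and free commutative monoids up to isomorphism. Finally, the object $j:X'\to X$ of $\mathbf{Set}^m$ is, by definition, an injective function, so $|X'|\le |X|$, and therefore $\textup{rk}\,(M)\le\textup{rk}\,(G)$.

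There is no serious obstacle here. The only points requiring attention are in the first paragraph: confirming that an isomorphism of objects in $\mathbf{QAb}^m$ genuinely transports freeness from $F_2X'$ and $F_1X$ to $M$ and $G$, so that their ranks are defined and agree with $|X'|$ and $|X|$. If one also wishes the statement to cover free objects of infinite rank, one simply reads $\le$ as the order on cardinal numbers rather than on natural numbers; the argument is otherwise unchanged.
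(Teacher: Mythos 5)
Your proof is correct. The paper states this proposition without proof, and your argument is precisely the evident one it intends: a free object is (up to isomorphism in $\mathbf{QAb}^m$) of the form $Fj$ for a set monomorphism $j:X'\rightarrow X$, so $M\cong F_2X'$ and $G\cong F_1X$ have ranks $|X'|$ and $|X|$, and injectivity of $j$ gives $|X'|\le|X|$. Your care about transporting freeness and rank along a $\mathbf{QAb}^m$-isomorphism, and the remark on infinite ranks (the paper's definition of rank is stated only for finite indexing sets), are appropriate and do not change the substance.
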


    Since the positive cone monoid determines the order structure on an abelian qo-group, we may observe the following.

    \begin{proposition}
        Any free abelian qo-group $i:M \rightarrow G$ can be viewed, in terms of $\mathbf{QAb}$, as $(\bigoplus\limits_{x\in X'}
        (\mathbb{Z},\leq))\oplus(\bigoplus\limits_{x\not\in X'} (\mathbb{Z}, =) )$, where the first term is a sum of copies of $\mathbb{Z}$ with the usual order, while the
        second term is a sum of $\mathbb{Z}$ with the discrete order.
    \end{proposition}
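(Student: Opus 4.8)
The plan is to unwind the definition of the left adjoint $F$ from Definition~\ref{D:leftadjoint}, compute the positive cone of the resulting abelian qo-group, and compare it with the positive cone of the claimed direct sum. By the preceding characterization of free abelian qo-groups, a free abelian qo-group $i:M\to G$ is (isomorphic in $\mathbf{QAb}^m$ to) $F$ applied to some $\mathbf{Set}^m$-object; since $F$ is a functor, and using the equivalence $\mathbf{Set}^i\simeq\mathbf{Set}^m$ of Theorem~\ref{T:setequiv}, we may replace that object by an honest set insertion $X'\hookrightarrow X$. Then $M\cong F_2X'\cong\bigoplus_{x\in X'}\mathbb{N}$ and $G\cong F_1X\cong\bigoplus_{x\in X}\mathbb{Z}$ by Proposition~\ref{P:freegroups}, and $i$ is the monoid monomorphism of free objects induced by the inclusion $X'\hookrightarrow X$.

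Next I would pass from $\mathbf{QAb}^m$ back to $\mathbf{QAb}$ through the chain $\mathbf{QAb}^m\simeq\mathbf{QAb}^i\cong\mathbf{QAb}$ of Theorems~\ref{T:qabequiv} and \ref{T:qabqabi}: the abelian qo-group corresponding to $i:M\to G$ has underlying group $G$ and positive cone $i(M)\subseteq G$, and, by the construction in Proposition~\ref{P:ConeCharacterization}, its quasi-order is $a\preceq b$ if and only if $b-a\in i(M)$. I would then compute $i(M)$ explicitly inside $\bigoplus_{x\in X}\mathbb{Z}$: it consists of exactly those finitely supported tuples $(a_x)_{x\in X}$ with $a_x\in\mathbb{N}$ for $x\in X'$ and $a_x=0$ for $x\notin X'$.

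The final step is to compute the positive cone of $(\bigoplus_{x\in X'}(\mathbb{Z},\leq))\oplus(\bigoplus_{x\notin X'}(\mathbb{Z},=))$, recording the routine fact that the positive cone of a direct sum of abelian qo-groups is the direct sum of the positive cones. Since the positive cone of $(\mathbb{Z},\leq)$ is $\mathbb{N}$ and that of $(\mathbb{Z},=)$ is $\{0\}$, this positive cone is $\bigoplus_{x\in X'}\mathbb{N}\oplus\bigoplus_{x\notin X'}\{0\}$, which is the very same subset of $\bigoplus_{x\in X}\mathbb{Z}$ as $i(M)$. Two abelian qo-groups on the same underlying group with the same positive cone have the same quasi-order (again by Proposition~\ref{P:ConeCharacterization}), so the identity map on $\bigoplus_{x\in X}\mathbb{Z}$ is the desired isomorphism in $\mathbf{QAb}$.

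I expect no genuine mathematical obstacle here; the care needed is entirely bookkeeping: tracking the object through the equivalences $\mathbf{Set}^i\simeq\mathbf{Set}^m$ and $\mathbf{QAb}^m\simeq\mathbf{QAb}^i\cong\mathbf{QAb}$ so that one may legitimately assume $X'\subseteq X$ with $i$ the inclusion, handling possibly infinite $X$ by working throughout with finitely supported tuples, and checking that $F$ of the abstract $\mathbf{Set}^m$-object and $F$ of its insertion representative yield isomorphic $\mathbf{QAb}^m$-objects — which follows from functoriality of $F$ together with Theorem~\ref{T:setequiv}.
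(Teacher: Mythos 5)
Your proposal is correct and follows essentially the same route as the paper: identify the free object's positive cone inside $\bigoplus_{x\in X}\mathbb{Z}$ as the elements with $a_x\in\mathbb{N}$ for $x\in X'$ and $a_x=0$ otherwise, and match this with the positive cone of the claimed direct-sum decomposition. Your version merely spells out the bookkeeping through the equivalences $\mathbf{Set}^i\simeq\mathbf{Set}^m$ and $\mathbf{QAb}^m\simeq\mathbf{QAb}^i\cong\mathbf{QAb}$ more explicitly than the paper does.
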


    \begin{proof}
        We translate the expression of the group in terms of direct sums into an expression of the insertion of a positive cone in $\mathbf{QAb}^i$. We recognize
        that in terms of the group structure we have $\bigoplus\limits_{x\in X} \mathbb{Z}$, and that an element of this group is in the positive
        cone if and only if $a_x\geq 0$ for each term in the sum. This means that $a_x=0$ for each $x\not\in X'$. Thus we may recognize the submonoid of the positive cone as $\bigoplus\limits_{x\in X'} \mathbb{Z}$ and take the monoid monomorphism as the natural set insertion.
    \end{proof}

    \begin{proposition}
        Suppose that $i:M\rightarrow G$ is a free abelian qo-group over $i':X'\to X$. Then:

        \begin{enumerate}
            \item $M$ is a conical monoid;
            \item $i$ is associated with an abelian qo-group;
            \item $i$ is associated with a lattice if and only if $i'$ is an isomorphism;
            \item $i$ is associated with a chain if and only if $i'$ is an isomorphism and $X=\{x\}$;
            \item $i$ is associated with an antichain ordering if and only if $X'=\emptyset$.
     \end{enumerate}
    \end{proposition}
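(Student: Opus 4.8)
The plan is to reduce everything to an explicit description of the free object. By Definition~\ref{D:leftadjoint} and Proposition~\ref{P:freegroups} we may identify $G$ with $\bigoplus_{x\in X}\mathbb{Z}$ and $M$ with $\bigoplus_{x'\in X'}\mathbb{N}$, with $i$ the monoid monomorphism carrying the generator indexed by $x'$ to the group generator indexed by $i'(x')$; since $i'$ is injective we treat $X'$ as a subset of $X$ and write $e_x\in G$ for the basis element at $x\in X$. Let $P=i(M)\subseteq G$ be the positive cone. By Proposition~\ref{P:ConeCharacterization} the associated order is $a\preceq b$ if and only if $b-a\in P$, that is, if and only if $b_x\ge a_x$ for all $x\in X'$ and $b_x=a_x$ for all $x\in X\setminus X'$. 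Each of the five statements is then read off from this formula.

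For (1): if $m\in M=\bigoplus_{x'\in X'}\mathbb{N}$ and $-m\in M$, then every coordinate of $m$ is both $\ge 0$ and $\le 0$, so $m=0$; hence $M^*=\{0\}$ and $M$ is conical. For (2): since $M$ is conical, Proposition~\ref{P:ConeCharacterization}(b) shows that $\preceq$ is in fact a partial order, so $i$ is associated with an abelian po-group, in particular with an abelian qo-group.

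For (3): if $i'$ is an isomorphism then $X'=X$ and $\preceq$ is exactly the coordinatewise order on $\bigoplus_{x\in X}\mathbb{Z}$, which is a lattice (meets and joins being taken coordinatewise). Conversely, if $i'$ is not an isomorphism, pick $x_0\in X\setminus X'$; any $c$ with $0\preceq c$ satisfies $c_{x_0}=0$, while any $c$ with $e_{x_0}\preceq c$ satisfies $c_{x_0}=1$, so $0$ and $e_{x_0}$ have no common upper bound and $\preceq$ is not a lattice order. For (4): a chain is in particular a lattice, so by (3) we must have $X'=X$, and then $\preceq$ is the coordinatewise order on $\bigoplus_{x\in X}\mathbb{Z}$, which is linear exactly when $|X|\le 1$ (if $x_1\ne x_2$ in $X$ then $e_{x_1}$ and $e_{x_2}$ are incomparable); the case $|X|=1$ yields $(\mathbb{Z},\le)$, a chain, giving the converse. (If $X=\emptyset$ the group is trivial; we disregard this degenerate case.) For (5): if $X'=\emptyset$ then $M=\{0\}$, so $P=\{0\}$ and $\preceq$ is the discrete order; conversely, if $x'\in X'$ then $e_{i'(x')}\in P\setminus\{0\}$, so $0\prec e_{i'(x')}$ and $\preceq$ is not an antichain.

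The only step carrying genuine content is the forward direction of (3): producing, whenever a coordinate outside $X'$ is present, two elements with no common upper bound. This is the obstacle, but a mild one, since the pair $0$, $e_{x_0}$ works at once. Everything else is a direct unwinding of the order formula, and (4) is merely (3) plus a one-line cardinality argument.
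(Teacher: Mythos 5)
Your proof is correct and proceeds exactly as the paper intends: the paper states this proposition without proof as an immediate consequence of the preceding proposition identifying the free object with $(\bigoplus_{x\in X'}(\mathbb{Z},\leq))\oplus(\bigoplus_{x\notin X'}(\mathbb{Z},=))$, and you simply read the five items off that same coordinatewise description (your explicit no-common-upper-bound pair for (3) and the incomparable generators for (4) are the right details to supply). Your parenthetical about the degenerate case $X=\emptyset$ in (4) is a fair observation of an edge case the paper itself ignores, and does not affect the argument.
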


    \begin{proposition}
        There are $n+1$ isomorphism classes for free abelian qo-groups whose abelian group reducts are free groups of rank $n$.
    \end{proposition}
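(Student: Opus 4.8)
The plan is to pin down a complete isomorphism invariant for free abelian qo-groups and then count the values it can take. First I would invoke the earlier theorem characterizing free abelian qo-groups: such an object is an $i:M\to G$ with both $M$ and $G$ free, and by the preceding proposition $\textup{rk}\,(M)\le\textup{rk}\,(G)$. Fixing $\textup{rk}\,(G)=n$ forces $\textup{rk}\,(M)=k$ for some $k$ with $0\le k\le n$, so there are at most $n+1$ possibilities for the pair of ranks.

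Next I would use the proposition describing a free abelian qo-group $i:M\to G$ over $i':X'\to X$ in terms of $\mathbf{QAb}$, namely as $(\bigoplus_{x\in X'}(\mathbb{Z},\le))\oplus(\bigoplus_{x\notin X'}(\mathbb{Z},=))$. When $\textup{rk}\,(G)=n$ and $\textup{rk}\,(M)=k$ this is, up to isomorphism, the object with group reduct $\bigoplus_{j=1}^{n}\mathbb{Z}$ whose positive cone is the submonoid $\bigoplus_{j=1}^{k}\mathbb{N}$ on the first $k$ coordinates, with $i$ the natural insertion. Hence any two free abelian qo-groups with the same $(n,k)$ are isomorphic, which shows that the number of isomorphism classes with group reduct of rank $n$ is at most $n+1$.

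It remains to show these $n+1$ classes are genuinely distinct, that is, that $k$ is an isomorphism invariant once $n$ is fixed. Here I would observe that an isomorphism in $\mathbf{QAb}^m$ between $i:M\to G$ and $i':M'\to G'$ is a commuting square built from a group isomorphism $G\to G'$ and a monoid isomorphism $M\to M'$; in particular it restricts to an isomorphism of positive cones, so $M\cong M'$ as monoids and therefore $\textup{rk}\,(M)=\textup{rk}\,(M')$. Since the object with parameters $(n,k)$ has positive cone free of rank exactly $k$, distinct values of $k\in\{0,1,\dots,n\}$ yield non-isomorphic objects. Combining the two bounds gives exactly $n+1$ isomorphism classes. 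If one prefers to argue in $\mathbf{QAb}$ rather than $\mathbf{QAb}^m$, the functor $C$ sending a qo-group to its positive cone makes the same point: $C$ carries isomorphic qo-groups to isomorphic monoids, and the rank of a free commutative monoid is a well-defined invariant.

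The argument has no real obstacle; the only point requiring care is making the count tight in both directions — using the structural proposition for the upper bound of $n+1$, and the rank of the positive cone for the lower bound — rather than stopping at ``at most $n+1$.''
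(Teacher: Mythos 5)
Your proposal is correct and follows essentially the same route as the paper: both arguments reduce a free abelian qo-group to the direct-sum description $\bigl(\bigoplus_{x\in X'}(\mathbb{Z},\leq)\bigr)\oplus\bigl(\bigoplus_{x\notin X'}(\mathbb{Z},=)\bigr)$ and classify by the pair of ranks $(\textup{rk}\,(G),\textup{rk}\,(M))$. In fact you are somewhat more careful than the paper's one-line proof, which only argues that equal ranks give isomorphic objects; your observation that an isomorphism restricts to an isomorphism of positive cones, so that $\textup{rk}\,(M)$ is an invariant and the $n+1$ classes are genuinely distinct, fills in the direction the paper leaves implicit.
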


    \begin{proof}
        Since every free abelian qo-group is a direct sum of copies of $\mathbb{Z}$, we may arrange the sums so that the terms corresponding to the
        image of $i'$ come first. Thus two free abelian qo-groups are isomorphic if the sizes of the generating sets of the group part and the
        monoid part are of the same cardinality.
    \end{proof}

    \begin{definition}
        For a free object $i:M\rightarrow G$, we define the \emph{defect} of $i$ to be $\textup{defect}(i):=\textup{rk}(G)-\textup{rk}(M)$. We say that a free object $i$ is \emph{defective} if $\textup{defect}(i)=\textup{rk}(G)$.
    \end{definition}

    \begin{proposition}
        For a free object $i:M\rightarrow G$, the pair $$(\textup{defect}(i), \textup{rk}(G))$$ determines the abelian qo-group up to isomorphism.
    \end{proposition}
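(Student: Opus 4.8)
The plan is to reduce the statement to the classification of free abelian qo-groups by the pair $(\mathrm{rk}(M),\mathrm{rk}(G))$ already obtained above, and then observe that this pair and the pair $(\textup{defect}(i),\mathrm{rk}(G))$ carry exactly the same information. Throughout, the discussion is implicitly confined to free objects of finite rank, since $\textup{defect}$ and $\mathrm{rk}$ are only defined there.

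First I would recall the structure of a free object. By the proposition exhibiting a free abelian qo-group $i:M\rightarrow G$ over $i':X'\to X$ as $\bigl(\bigoplus_{x\in X'}(\mathbb{Z},\leq)\bigr)\oplus\bigl(\bigoplus_{x\notin X'}(\mathbb{Z},=)\bigr)$, together with the proposition asserting that there are exactly $n+1$ isomorphism classes of free abelian qo-groups whose group reduct has rank $n$, two free objects $i:M\rightarrow G$ and $j:N\rightarrow H$ are isomorphic (equivalently, their associated abelian qo-groups are isomorphic in $\mathbf{QAb}$) precisely when $\mathrm{rk}(G)=\mathrm{rk}(H)$ and $\mathrm{rk}(M)=\mathrm{rk}(N)$.

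Next I would unwind the definition of defect: $\textup{defect}(i)=\mathrm{rk}(G)-\mathrm{rk}(M)$, so $\mathrm{rk}(M)=\mathrm{rk}(G)-\textup{defect}(i)$. Hence the assignment $(\textup{defect}(i),\mathrm{rk}(G))\mapsto(\mathrm{rk}(M),\mathrm{rk}(G))$ is well defined — its first coordinate is a genuine element of $\{0,1,\dots,\mathrm{rk}(G)\}$, which is exactly the content of the earlier inequality $\mathrm{rk}(M)\leq\mathrm{rk}(G)$ — and it is a bijection onto its image, with inverse $(\mathrm{rk}(M),\mathrm{rk}(G))\mapsto(\mathrm{rk}(G)-\mathrm{rk}(M),\mathrm{rk}(G))$. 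Composing with the classification recalled above shows that equality of the pairs $(\textup{defect}(i),\mathrm{rk}(G))$ and $(\textup{defect}(j),\mathrm{rk}(H))$ forces equality of $(\mathrm{rk}(M),\mathrm{rk}(G))$ and $(\mathrm{rk}(N),\mathrm{rk}(H))$, and therefore an isomorphism of the associated abelian qo-groups. I do not expect any real obstacle: the only point needing care is keeping track of the finite-rank hypothesis and the nonnegativity of the defect, after which the argument is the bookkeeping just described.
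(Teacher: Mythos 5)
Your argument is correct and is exactly the (implicit) one the paper intends: the paper states this proposition without proof, as an immediate consequence of the preceding classification of free abelian qo-groups by $(\textup{rk}(M),\textup{rk}(G))$, and your observation that $\textup{rk}(M)=\textup{rk}(G)-\textup{defect}(i)$ recovers that data from $(\textup{defect}(i),\textup{rk}(G))$ is all that is needed. Your bookkeeping about nonnegativity of the defect and the finite-rank hypothesis is a reasonable extra precaution, not a departure from the paper's route.
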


The difference in our approach to free abelian qo-groups from that adopted in \cite{Pigozzi} is that we are focusing on the adjunction between $\mathbf{Set}^m$ and $\mathbf{QAb}^m$, so we obtain free abelian qo-groups with nontrivial order structure.

\section{Monadicity
}\label{S:Monadicity}



\begin{definition}\label{D:monad}
    A \emph{monad} over a category $\mathbf{C}$ is a triple $(T,\eta, \mu)$ consisting of an endofunctor $T:\mathbf{C}\rightarrow \mathbf{C}$, and two natural transformations $\eta: 1\rightarrow T$ and $\mu: T^2\rightarrow T$, called the \emph{unit} and the \emph{multiplication} respectively. The triple must satisfy the identity and associative laws in Figure \ref{F:monadlaws}.
\end{definition}

\begin{figure}[hbt]
    $$\xymatrix{T\ar[r]^{T\eta}\ar@{=}[rd]\ar[d]_{\eta T} & T^2\ar[d]^\mu & T^3\ar[r]^{T\mu}\ar[d]_{\mu T} & T^2\ar[d]^\mu \\ T^2\ar[r]^\mu & T & T^2\ar[r]^\mu & T}$$
    \caption{Identity and Associative Laws of a Monad}\label{F:monadlaws}
\end{figure}

\begin{definition}
    Each monad $(T:\mathbf{C}\rightarrow \mathbf{C},\eta, \mu)$ yields the category $\mathbf{C}^T$ of \emph{Eilenberg-Moore algebras} over that monad. The objects of $\mathbf{C}^T$ are pairs $(x,h)$, where $x$ is a $\mathbf{C}$-object and $h:Tx\rightarrow x$ is a $\mathbf{C}$-morphism satisfying the associative and identity laws of Figure~\ref{F:algebralaws}. The morphisms of $\mathbf{C}^T$ are $\mathbf{C}$-morphisms $f$ that make Figure~\ref{F:structurecompatibility} commute.
\end{definition}

\begin{figure}[hbt]
    $$\xymatrix{x\ar@{=}[rd]\ar[r]^{\eta_x} & Tx\ar[d]^h & T^2x\ar[d]_{Th}\ar[r]^{\mu_x} & Tx\ar[d]^h \\ & x & Tx\ar[r]^h & x}$$
    \caption{Identity and associative laws of a $\mathbf{C}^T$-algebra}\label{F:algebralaws}
\end{figure}
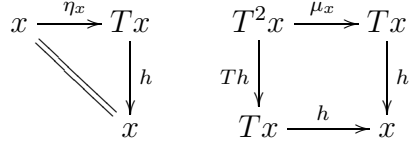

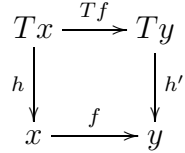
\begin{figure}[hbt]
    $$\xymatrix{Tx\ar[r]^{Tf}\ar[d]_h & Ty\ar[d]^{h'} \\ x\ar[r]^f & y}$$
    \caption{A $\mathbf{C}^T$-Morphism $f$}\label{F:structurecompatibility}
\end{figure}

For any adjunction $(F,U,\eta, \varepsilon)$ with $F:\mathbf{C}\rightarrow \mathbf{D}$, there is a monad $(UF,\eta, U\varepsilon F)$. Furthermore, for this monad, there is an Eilenberg-Moore category $\mathbf{C}^{U\varepsilon F}$. The Eilenberg-Moore category is a terminal object in that there is a unique functor $\overline{G}$ making Figure~\ref{F:terminality} commute.  If $\overline{G}$ is an equivalence, the adjunction is said to be \emph{monadic}. (As noted in \cite{MR1712872}, some authors require an isomorphism between $\mathbf{C}^T$ and $\mathbf{A}$ for an adjunction to qualify as monadic.) If there exists a monadic adjunction with $\mathbf{C}=\mathbf{Set}$, the category $\mathbf{D}$ is said to be \emph{algebraic}. In particular, any variety of algebras is algebraic, motivating the terminology \cite[Cor. IV.4.2.8]{MR1673047}.

\begin{figure}[hbt]

$$\xymatrix{\mathbf{Set}^m\ar@{=}[d]\ar[r] & \mathbf{Set}^{mT}\ar@{<--}[d]^{\overline{G}}\ar[r] & \mathbf{Set}^m\ar@{=}[d] \\
\mathbf{Set}^m\ar[r]^F & \mathbf{QAb}^m\ar[r]^U & \mathbf{Set}^m}$$

\caption{The Eilenberg-Moore comparison}\label{F:terminality}
\end{figure}
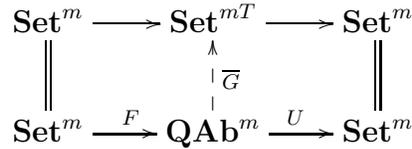

We shall show that the adjunction of Theorem~\ref{T:leftadjoint} is monadic.
When writing elements of a free group or monoid, we adopt the convention of juxtaposition to represent a word, using $1$ for the empty word. Since the initial operation in an abelian qo-group will always be denoted additively, there should be no confusion as to which elements are considered as words and which are considered as elements of the corresponding monoids or groups.

    For an object $(i,h)$ with $i:X'\rightarrow X$ of $\mathbf{Set}^{mUF}$, define 
    $$
    \overline{F}(i:X'\rightarrow X)= (i:X'\rightarrow X) \, .
    $$
    Define an operation $$+_1:X^2\rightarrow X; (x,y)\mapsto h_1(xy)$$ on $X$ and an operation $$+_2:X'^2\rightarrow X'; (x,y)\mapsto h_2(xy)$$ on $X'$. Define $$-: X\rightarrow X; x\mapsto h(x^{-1})\, .$$ Finally, define  $0_1=h_1(1)$ and $0_2=h_2(1)$.

\begin{lemma}\label{L:lem1}
    As defined, $\overline{F}$ maps from the object class of $\mathbf{Set}^{mUF}$ to the object class of $\mathbf{QAb}^m$.
\end{lemma}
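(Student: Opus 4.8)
The plan is to verify that $\overline{F}(i,h)=(i\colon X'\to X)$, equipped with the operations defined just above, is an object of $\mathbf{QAb}^m$, that is, a monoid monomorphism $M\to U'H$ with $H$ an abelian group. Concretely this amounts to three checks: (i) $(X,+_1,-,0_1)$ is an abelian group; (ii) $(X',+_2,0_2)$ is a commutative monoid; (iii) $i$ is a monoid homomorphism for these structures. Injectivity of $i$ is already part of the datum, since $(i\colon X'\to X)$ is an object of $\mathbf{Set}^m$; and once (i)--(iii) hold, the image $i(X')$ is automatically a submonoid of the group $X$, so nothing further is required.

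For (i) and (ii) I would reduce to the monadicity of $\mathbf{Ab}$ and of $\mathbf{CMon}$ over $\mathbf{Set}$, both of which are varieties and hence algebraic. Write $T_1=U_1F_1$ for the free abelian group monad and $T_2=U_2F_2$ for the free commutative monoid monad on $\mathbf{Set}$, and let $P,P'\colon\mathbf{Set}^m\to\mathbf{Set}$ be the codomain and domain functors, $P(i\colon X'\to X)=X$ and $P'(i\colon X'\to X)=X'$. First I would check that $P$ and $P'$ carry the monad $T=UF$ of Theorem~\ref{T:leftadjoint} onto $T_1$ and $T_2$ respectively: on objects this is immediate from Definition~\ref{D:leftadjoint}, since $P\,UF(i)=U_2U'F_1X=T_1X$ and $P'UF(i)=U_2F_2X'=T_2X'$; for the unit one reads off the two components of $\eta_i$ (which is induced by $\eta_{1X}$), and for the multiplication one reads off the components of $\mu_i=U\varepsilon_{Fi}$, using that $\varepsilon_{Fi}$ is induced by $\varepsilon_{1,F_1X}$ and that its small component is forced to be $U_2\varepsilon_{2,F_2X'}$ because $Fi$ is monic. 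It then follows that $(X,h_1)$ is a $T_1$-algebra and $(X',h_2)$ is a $T_2$-algebra, and by algebraicity such algebras correspond exactly to an abelian group on $X$ with $x+y=h_1(xy)$, $-x=h_1(x^{-1})$, $0=h_1(1)$ and to a commutative monoid on $X'$ with $x+y=h_2(xy)$, $0=h_2(1)$ --- precisely the operations $+_1,+_2,-,0_1,0_2$ defined before the lemma. This settles (i) and (ii).

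For (iii) I would use the commuting square built into the $\mathbf{Set}^m$-morphism $h=(h_1,h_2)$, namely $i\circ h_2=h_1\circ UFi$, where $UFi\colon U_2F_2X'\to U_2U'F_1X$ is the underlying set map of the monoid monomorphism $Fi$. Since $Fi$ is the monoid homomorphism sending each generator $x'$ of $F_2X'$ to the generator $i(x')$ of $F_1X$, we get $UFi(1)=1$ and $UFi(x'y')=(ix')(iy')$ in $F_1X$. Hence $i(0_2)=i(h_2(1))=h_1(UFi(1))=h_1(1)=0_1$, and $i(x'+_2 y')=i(h_2(x'y'))=h_1(UFi(x'y'))=h_1\bigl((ix')(iy')\bigr)=(ix')+_1(iy')$, so $i$ respects the monoid structures. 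Combining the three checks, $\overline{F}(i,h)$ is an injective monoid homomorphism into the monoid reduct of the abelian group $X$, that is, an object of $\mathbf{QAb}^m$.

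The step I expect to be the main obstacle is the first part of that reduction --- confirming that $T=UF$ restricts along $P$ and $P'$ to the standard monads $T_1$ and $T_2$, and in particular identifying the small component of the counit $\varepsilon_{Fi}$, where one must invoke the monic-ness of $Fi$ to determine it uniquely from its commuting square. Once that bookkeeping is done, (i) and (ii) are immediate from the algebraicity of $\mathbf{Ab}$ and $\mathbf{CMon}$, while (iii) is the short diagram chase above. (If one prefers not to appeal to the monadicity of $\mathbf{Ab}$ and $\mathbf{CMon}$, the group and monoid axioms can be verified directly from the identity and associativity laws of the $T$-algebra $h$, at the cost of some routine word manipulation.)
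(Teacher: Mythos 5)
Your proposal is correct, but it takes a different route from the paper for the main part of the verification. Where you establish that $(X,+_1,-,0_1)$ is an abelian group and $(X',+_2,0_2)$ is a commutative monoid by projecting the composite monad $T=UF$ on $\mathbf{Set}^m$ onto its two components --- identifying $P\,T\cong T_1 P$ and $P'T\cong T_2 P'$ with the free abelian group and free commutative monoid monads, checking the unit and the small component of the counit (via monicity of $Fi$), and then invoking the monadicity of $\mathbf{Ab}$ and of commutative monoids over $\mathbf{Set}$ --- the paper instead verifies the algebra axioms directly from the Eilenberg--Moore laws of the structure map $h$: associativity of $+_1$ from $h_1(U_1F_1h_1)=h_1(U_1\varepsilon_1F_1)$, the identity law from $h\circ\eta=1$, commutativity from commutation of letters in the free abelian group, i.e.\ exactly the ``routine word manipulation'' you mention in your closing parenthesis. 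Your reduction buys two things: it packages all group and monoid axioms at once (the paper's equational check is sketchier, e.g.\ the inverse axiom for $-$ is never verified explicitly), and it localizes the real content of the lemma in the bookkeeping step you flag, namely identifying the components of $\eta_i$ and $\mu_i=U\varepsilon_{Fi}$; the cost is that this identification must actually be carried out (it is true: the small component of $\varepsilon_{Fi}$ agrees with $\varepsilon_{2,F_2X'}$ on generators and is then forced by monicity). The final step --- $i(0_2)=0_1$ and $i(x'+_2y')=i(x')+_1 i(y')$ via the commuting square $i\circ h_2=h_1\circ UFi$, with injectivity of $i$ carried over from $\mathbf{Set}^m$ --- coincides with the paper's argument.
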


\begin{proof}
    We must show, by using the associative and identity laws of the structure map $h$ as expressed in Figure \ref{F:algebralaws}, that $i:X'\rightarrow X$ is indeed a monoid monomorphism from the monoid $X'$ to the underlying monoid of the group $X$. Note that in Figure \ref{F:algebralaws}, $\eta$ is the identity of the
    monad and the unit of adjunction and $\mu= U\varepsilon F$ is the multiplication, and $T=UF$ using the adjunction $(F,U,\eta, \varepsilon)$ from Definition \ref{D:leftadjoint}.

    We see that $+_1$ is associative since $(x+_1 y)+_1z=h_1(h_1(xy)z)$, and the associative law provides that
    $h_1(U_1F_1h_1)=h_1(U_1\varepsilon_1F_1)$, that is to say, $h_1(h_1(xy)h_1(z))=h_1(xyz)=h_1(h_1(x)h_1(yz)$, and the identity law gives that $h_1(z)=z$ for every $z\in X$. Thus $(x+_1 y)+_1z=x+_1 (y+_1z)$. The associative law for $+_2$ follows similarly from the fact that $h_2(U_2F_2h_2)=h_2(U_2\varepsilon_2F_2)$.

    The equality $0_1+_1x=x=x+_10_1$ follows directly from the identity law and the fact that since $F_1$ generates a free abelian group, the letters commute. The identity for $X'$ is similar. We also have that $i(0_2)=0_1$, since $i(h_2(1))=h_1(UFi(1))$ and $Fi(1)=1$. Furthermore, we have that $h_1(UFi(xy))=i(h_2(x,y))$, so $i$ is a monoid homomorphism which is a monomorphism since $i$ is a set monomorphism.

\end{proof}

\begin{lemma}\label{L:lem2}
    The object map as defined in Lemma \ref{L:lem1} has an extension to a morphism part.
\end{lemma}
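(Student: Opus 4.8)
The plan is to take $\overline{F}$ on a morphism to be the underlying pair of set maps itself, and to check that this pair is automatically structure-preserving. Concretely, let $f=(f_1,f_2)\colon(i,h)\to(j,k)$ be a morphism of $\mathbf{Set}^{mUF}$, i.e. a $\mathbf{Set}^m$-morphism $(f_1\colon X\to Y,\ f_2\colon X'\to Y')$ which is compatible with the structure maps in the sense of Figure~\ref{F:structurecompatibility}. Equip $X,X',Y,Y'$ with the operations produced by Lemma~\ref{L:lem1} from $h$ and $k$ respectively (writing the ones coming from $k$ with a prime). I set $\overline{F}f:=(f_1,f_2)$ and must verify: (i) that $f_1$ is a group homomorphism $(X,+_1,-,0_1)\to(Y,+_1',-',0_1')$; (ii) that $f_2$ is a homomorphism of commutative monoids $(X',+_2,0_2)\to(Y',+_2',0_2')$; and (iii) that $j\circ f_2=f_1\circ i$. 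Point (iii) is nothing but the defining commuting square of a $\mathbf{Set}^m$-morphism, so it needs no work; points (i) and (ii) are the substance.

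First I would record what the endofunctor $T=UF$ does to $f$: by Definition~\ref{D:leftadjoint} and the construction of $U$, $Tf=UFf$ has underlying components $U_1F_1f_1$ on $X$ and $U_2F_2f_2$ on $X'$, where $F_1f_1$ (resp. $F_2f_2$) is the group (resp. monoid) homomorphism sending a generator $x$ (resp. $x'$) to $f_1(x)$ (resp. $f_2(x')$), so that on words it replaces each letter $z$ by $f_1(z)$ (resp. $f_2(z)$). Reading off Figure~\ref{F:structurecompatibility} componentwise then gives the two identities $f_1\circ h_1=k_1\circ U_1F_1f_1$ and $f_2\circ h_2=k_2\circ U_2F_2f_2$.

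The homomorphism properties now fall out of these identities and the definitions in Lemma~\ref{L:lem1}. For the addition on $X$: $f_1(x+_1y)=f_1(h_1(xy))=k_1\bigl((U_1F_1f_1)(xy)\bigr)=k_1\bigl(f_1(x)\,f_1(y)\bigr)=f_1(x)+_1'f_1(y)$, where the third equality is the observation that the length-two word $xy$ is carried to $f_1(x)f_1(y)$, and the last is the definition of $+_1'$. Substituting the empty word $1$ and the length-one word $x^{-1}$ for $xy$ in the same chain of equalities yields $f_1(0_1)=k_1(1)=0_1'$ and $f_1(-x)=k_1\bigl(f_1(x)^{-1}\bigr)=-'f_1(x)$, so $f_1$ is a group homomorphism. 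The identity $f_2\circ h_2=k_2\circ U_2F_2f_2$ produces the analogous statements for $f_2$ by the same three-step computation. Finally, since $\overline{F}$ sends each identity $\mathbf{Set}^{mUF}$-morphism to the corresponding identity pair of set maps and sends a composite to the (componentwise) composite, it respects composition and identities, so it is a functor $\mathbf{Set}^{mUF}\to\mathbf{QAb}^m$.

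I do not expect a genuine obstacle; the one place to be careful is the bookkeeping behind the pivotal equality $k_1\bigl((U_1F_1f_1)(xy)\bigr)=k_1\bigl(f_1(x)\,f_1(y)\bigr)$ — that is, spelling out precisely how the induced free-algebra map acts on words of length two (and of lengths zero and one) — since everything else is a mechanical unwinding of the Eilenberg--Moore compatibility square into the assertion that $(f_1,f_2)$ preserves the manufactured operations.
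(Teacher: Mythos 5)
Your proposal is correct and follows the same route as the paper's proof: take $\overline{F}f$ to be the same pair of set maps and use the Eilenberg--Moore compatibility square $f\circ h = h'\circ UFf$, read componentwise, to see that $f_1$ and $f_2$ preserve the operations manufactured in Lemma~\ref{L:lem1}. You simply spell out more details (identity, negation, and functoriality) than the paper does.
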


\begin{proof}
    For $f\in \mathbf{Set}^{mUF}(i,j)$, define $\overline{F}f \in \mathbf{QAb}^m(\overline{F}i, \overline{F}j)$ as follows. Define the individual maps $\overline{F}_1f_1$ and $\overline{F}_2f_2$ from $X$ to $Y$ and from $X'$ to $Y'$ to be the same set maps as in $\mathbf{Set}^mUF$. We need only verify that $f_1$ and $f_2$
    preserve $+_1$ and $+_2$ respectively. Since $f_1\circ h_1=g_1\circ U_1F_1f_1$ we must have $f_1(h_1(x,y))=g_1(f_1(x),f_1(y))$ where $g_1$ is the part of the structure map $j:Y'\rightarrow Y$ which acts on $Y$. Consequently $\overline{F}_2f_2$ is a monoid homomorphism as well.
\end{proof}

\begin{lemma}\label{L:lem3}
    The morphisms $$\overline{F}\colon\mathbf{Set}^{mUF}\rightarrow \mathbf{QAb}^m \mbox{ and  }\ \overline{G}\colon\mathbf{QAb}^m\rightarrow \mathbf{Set}^m$$ are
    mutually inverse.
\end{lemma}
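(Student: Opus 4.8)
The plan is to use the explicit description of the Eilenberg--Moore comparison functor and to check that the two composites $\overline F\,\overline G$ and $\overline G\,\overline F$ are identities. Recall that for the adjunction $(F,U,\eta,\varepsilon)$ of Theorem~\ref{T:leftadjoint} and its monad $T=UF$, the comparison functor $\overline G\colon\mathbf{QAb}^m\rightarrow\mathbf{Set}^{mUF}$ of Figure~\ref{F:terminality} sends a monoid monomorphism $j\colon M\rightarrow U'G$ to the $T$-algebra $(Uj,\;U\varepsilon_j)$ and sends a morphism $f$ to $Uf$; on the group component, $\varepsilon_j$ is induced by the counit $\varepsilon_{1G}\colon F_1U_1G\rightarrow G$ of the $\mathbf{Set}$--$\mathbf{Ab}$ adjunction, which evaluates a word of elements of $G$ (with formal inverses) under the group operations of $G$, and similarly on the monoid component.

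First I would verify $\overline F\,\overline G=1_{\mathbf{QAb}^m}$. Starting from $j\colon M\rightarrow U'G$, the algebra $\overline Gj$ is the set map $j$ equipped with structure map $h=U\varepsilon_j$. Unwinding the definitions of $+_1,+_2,-,0_1,0_2$ in terms of $h$ we get $x+_1y=h_1(xy)=x+y$, $-x=h_1(x^{-1})=-x$, and $0_1=h_1(1)=0$ in $G$, and likewise the operation and identity of $M$ are recovered on the monoid component; hence $\overline F\,\overline Gj$ is the original object $j$, with exactly its original group and monoid structures. On morphisms, $\overline G$ forgets a $\mathbf{QAb}^m$-morphism to its underlying pair of set maps and $\overline F$ retains exactly those set maps, so $\overline F\,\overline Gf=f$.

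Next I would verify $\overline G\,\overline F=1_{\mathbf{Set}^{mUF}}$. Given a $T$-algebra $(i\colon X'\rightarrow X,h)$, Lemma~\ref{L:lem1} tells us $\overline F(i,h)$ is $i$ with $X$ carrying the group structure $(+_1,-,0_1)$ (call it $G$) and $X'$ the monoid structure $(+_2,0_2)$. Applying $\overline G$ returns the $T$-algebra on the same set map $i$ with structure map $U\varepsilon_{\overline F(i,h)}$, so the heart of the proof is the identity $U\varepsilon_{\overline F(i,h)}=h$. On the group component this says that $h_1$ evaluated on a word $x_1\cdots x_n$ (letters and formal inverses) equals the iterated sum $x_1+_1\cdots+_1 x_n$ computed with the reconstructed operations, since the latter is exactly the value of $\varepsilon_{1G}$ on that word. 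I would prove this by induction on word length from the associative and identity laws of Figure~\ref{F:algebralaws}: the associative law applied to the word of words $[x_1\cdots x_{n-1}][x_n]$ gives $h_1(x_1\cdots x_n)=h_1\!\bigl(h_1(x_1\cdots x_{n-1})\,h_1(x_n)\bigr)$, and using the identity law on $x_n$ together with the definition of $+_1$ this is $h_1(x_1\cdots x_{n-1})+_1 x_n$, whence the induction closes; a formal inverse $x^{-1}$ is handled identically via the definition of $-$, and the monoid component is the same argument with $+_2$. Thus $\overline G\,\overline F(i,h)=(i,h)$, and on morphisms both composites are identities since each functor acts as the identity on the underlying set maps.

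I expect the inductive identification of $h_1$ (and $h_2$) with evaluation under the reconstructed operations to be the only step requiring genuine care. Everything else is bookkeeping, and the morphism parts are automatic because a morphism of $\mathbf{QAb}^m$ or of $\mathbf{Set}^{mUF}$ is determined by its component on $X$ together with the commuting square and the monicity of $i$, as already observed in the proof of Theorem~\ref{T:leftadjoint}.
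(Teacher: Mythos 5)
Your proposal is correct and takes essentially the same route as the paper: both verify that the composites $\overline F\,\overline G$ and $\overline G\,\overline F$ are identities by showing that the reconstructed operations $+_1,+_2,-,0_1,0_2$ and the structure map $h$ determine one another, with the functors acting identically on underlying set maps. Your induction on word length via the associative and identity laws of Figure~\ref{F:algebralaws} simply makes explicit the step the paper only asserts (that the structure map is determined by its values on words of length two), so it is a more detailed rendering of the same argument rather than a different one.
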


\begin{proof}
    Let $f:i\rightarrow j$ be a morphism in $\mathbf{QAb}^m$. If we apply $\overline{G}$, we obtain the morphism $\overline{G}f:Ui\rightarrow Uj$, where $Ui$ has structure map $h=(h_1,h_2)$ with $h_1(xy)=x+_1y$, $h_2(xy)=x+_2 y$, and $Uj$ has structure map $g=(g_1,g_2)$ with $g_1(xy)=x\cdot_1 y$, $g_2(xy)=x\cdot_2 y$. Then applying $\overline{F}$ to this morphism we obtain $\overline{F}\overline{G}f:Ui\rightarrow Uj$ with binary operations on $Ui$ defined as $x+_1y=h_1(xy)$, $x+_2y=h_2(xy)$, and on $Uj$ as $x\cdot_1y=g_1(xy)$, $x\cdot_2y=g_2(xy)$, which was the morphism that was initially present.

    Let $f:i'\rightarrow j'$ be a morphism in $\mathbf{Set}^m$. As before, we see that the underlying sets of the morphism are unchanged under application of $\overline{F}$ and $\overline{G}$. The operations $+_1$, $+_2$, $\cdot_1$, and $\cdot_2$ are all determined by the action of the structure map on words of length two. Likewise, the structure maps are determined by the values of each of the binary operations previously listed.
\end{proof}

\begin{theorem}\label{T:QAb2Setm}
    The category $\mathbf{QAb}$ of abelian qo-groups is monadic over the category $\mathbf{Set}^m$.
\end{theorem}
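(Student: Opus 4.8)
The plan is to combine the adjunction of Theorem~\ref{T:leftadjoint} with the three lemmas above and then transport the resulting monadicity along the equivalences of Section~\ref{S:OrderedGroups}. First I would record that, by Theorem~\ref{T:leftadjoint}, $F\dashv U$ is an adjunction between $\mathbf{Set}^m$ and $\mathbf{QAb}^m$; writing $T=UF$ for the associated monad on $\mathbf{Set}^m$, this yields the Eilenberg--Moore category $\mathbf{Set}^{mT}=\mathbf{Set}^{mUF}$ together with the canonical comparison functor $\overline{G}\colon\mathbf{QAb}^m\to\mathbf{Set}^{mUF}$ of Figure~\ref{F:terminality}, which sends a monoid monomorphism $j\colon M\to U'G$ to the $T$-algebra $(Uj,\,U\varepsilon_j)$ and acts as $U$ on morphisms.

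Next I would invoke Lemmas~\ref{L:lem1}--\ref{L:lem3}: these construct a functor $\overline{F}\colon\mathbf{Set}^{mUF}\to\mathbf{QAb}^m$ (well defined on objects by Lemma~\ref{L:lem1} and on morphisms by Lemma~\ref{L:lem2}) and show in Lemma~\ref{L:lem3} that $\overline{F}$ and $\overline{G}$ are mutually inverse. Hence $\overline{G}$ is an isomorphism of categories --- in particular an equivalence --- so, by the definition of monadicity recalled around Figure~\ref{F:terminality}, the adjunction $F\dashv U$ is monadic (indeed in the strong sense that requires an isomorphism). Thus $U\colon\mathbf{QAb}^m\to\mathbf{Set}^m$ exhibits $\mathbf{QAb}^m$ as monadic over $\mathbf{Set}^m$.

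It remains to pass from $\mathbf{QAb}^m$ to $\mathbf{QAb}$. By Theorem~\ref{T:qabqabi} there is an isomorphism $\mathbf{QAb}\cong\mathbf{QAb}^i$, and by Theorem~\ref{T:qabequiv} an equivalence $\mathbf{QAb}^i\simeq\mathbf{QAb}^m$; composing these produces an equivalence $E\colon\mathbf{QAb}\to\mathbf{QAb}^m$. I would then appeal to the standard fact that monadicity is stable under precomposition with an equivalence: if $V$ is monadic and $E$ is an equivalence, then $V\circ E$ is again monadic, since a quasi-inverse of $E$ supplies a left adjoint to $V\circ E$, the induced monad agrees with the original one up to isomorphism, and the new comparison functor is the composite of $E$ with the old (equivalence) comparison functor, hence an equivalence. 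Applying this with $V=U\colon\mathbf{QAb}^m\to\mathbf{Set}^m$ shows that $U\circ E\colon\mathbf{QAb}\to\mathbf{Set}^m$ is monadic, which is exactly the assertion of the theorem.

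I expect the only real delicacy --- and it is mild --- to be the bookkeeping verifying that the $\overline{G}$ of Figure~\ref{F:terminality} is literally the Eilenberg--Moore comparison functor for the monad $UF$ (that is, $U^{T}\overline{G}=U$ and $\overline{G}F=F^{T}$), together with the monad-isomorphism step in the transport argument. No new computation beyond Theorem~\ref{T:leftadjoint} and Lemmas~\ref{L:lem1}--\ref{L:lem3} is required; the proof is essentially an assembly of already-established facts.
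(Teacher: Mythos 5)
Your proposal is correct and follows essentially the same route as the paper: both assemble the adjunction of Theorem~\ref{T:leftadjoint} with Lemmas~\ref{L:lem1}--\ref{L:lem3} to identify $\mathbf{Set}^{mUF}$ with $\mathbf{QAb}^m$, and then chain through the isomorphism $\mathbf{QAb}\cong\mathbf{QAb}^i$ (Theorem~\ref{T:qabqabi}) and the equivalence $\mathbf{QAb}^i\simeq\mathbf{QAb}^m$ (Theorem~\ref{T:qabequiv}). The only difference is that you make explicit two points the paper leaves implicit --- that $\overline{G}$ is indeed the Eilenberg--Moore comparison functor and that monadicity transports along the equivalence $\mathbf{QAb}\simeq\mathbf{QAb}^m$ --- which is a welcome tightening, not a different argument.
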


\begin{proof}
    We must show that $\mathbf{QAb}$ is equivalent to $\mathbf{Set}^{mT}$ for some endofunctor $T$. Using Lemmas~\ref{L:lem1} to \ref{L:lem3} we see that $\mathbf{Set}^{mUF}$ is isomorphic to $\mathbf{QAb}^m$. In turn, Theorem \ref{T:qabqabi} shows that $\mathbf{QAb}^i$ is isomorphic to $\mathbf{QAb}$. Furthermore Theorem~\ref{T:qabequiv} shows that $\mathbf{QAb}^i$ is equivalent to $\mathbf{QAb}^m$. Therefore we have $\mathbf{Set}^{mUF}\cong \mathbf{QAb}^m \simeq \mathbf{QAb}^i\cong\mathbf{QAb}$ as desired.
\end{proof}

It is readily verified that the adjunction between $\mathbf{Set}^i$ and $\mathbf{QAb}^i$ is also a monadic adjunction. One might hope that there would be a monadic adjunction between $\mathbf{Set}^m$ and $\mathbf{PAb}^m$ defined in a similar way as in Definition~\ref{D:leftadjoint}, since all of the objects in the image of $F$ are isomorphic to objects of $\mathbf{PAb}^m$. That this is not the case, however, is shown by the following example.

\begin{example}
    Let $X'=X=\{x, y\}$ and $i:X'\rightarrow X$ be the identity; also define $h:UFi\rightarrow i$ have $h_1:U_1F_1X\rightarrow X$ where $x$, $xx$, and $yy$ are sent to $x$ and both $y$ and $xy$ are sent to $y$. Then the $UF$-algebra $(i, h)$ becomes isomorphic to $\mathbb{Z}_2$ with nontrivial order, which by Example~\ref{E:finite} is not isomorphic to an object of $\mathbf{PAb}^m$. Therefore $\mathbf{PAb}^m$ is not monadic over $\mathbf{Set}^m$ with the adjunction described.
\end{example}

We end by returning to quasi-ordered groups of divisibility under the partial order of divisibility.

\begin{example}
    Let $\mathbf{Dom}_i$ be the category of integral domains with injective ring homomorphisms. Let $\mathbf{Fld}$ be the category of fields. Then there is a well known functor $Q:\mathbf{Dom}_i\rightarrow \mathbf{Fld}$ which assigns to each integral domain its quotient field. There is another functor $G:\mathbf{Fld}\rightarrow \mathbf{Ab}$ which assigns to each field its multiplicative group of nonzero elements. The quasi-ordered group of divisibility functor is the functor $D:\mathbf{Dom}_i\rightarrow \mathbf{QAb}^m$ taking a domain $R$ to $i: R^*\rightarrow U'GQR$.

    One may also construct the quasi-ordered group of divisibility using the multiplication operation on the quotient field as $+_1$ and the original multiplication of the ring as $+_2$, which is seen to be much more compact in many respects.

    The antisymmetrization of this object will produce the classical group of divisibility.
\end{example}

\bibliographystyle{plain}
\bibliography{EliQOAG}

\end{document}